\def\be{\begin{equation}}
	\def\ee{\end{equation}}
\def\bes{\begin{equation*}}
	\def\ees{\end{equation*}}
\def\={\; = \;}
\def\+{\, + \,}
\def\m{\, - \,}
\def\:{\; \colon \;}
\def\eps{\varepsilon}
\def\sm{\smallsetminus}
\newcommand*\bmat[4]{\begin{bmatrix}#1&#2\\#3&#4\end{bmatrix}}
\newcommand*\smat[4]{\begin{smallmatrix}#1&#2\\#3&#4\end{smallmatrix}}
\newcommand{\CC}{\mathbb{C}}
\newcommand{\RR}{\mathbb{R}}
\newcommand{\QQ}{\mathbb{Q}}
\newcommand{\ZZ}{\mathbb{Z}}
\newcommand{\PP}{\mathbb{P}}
\newcommand{\NN}{\mathbb{N}}
\newcommand{\Hf}{\mathfrak{H}}
\newcommand{\An}{\mathcal{V}}
\newcommand{\MM}{\mathcal{M}}
\newcommand{\RM}{\mathcal{R}}
\newcommand{\FR}{\mathcal{F}}
\newcommand{\JR}{\mathcal{J}}
\newcommand{\Pp}{\mathcal{P}}
\newcommand{\Aa}{\mathcal{A}}
\newcommand{\Bb}{\mathcal{B}}
\newcommand{\Oo}{\mathcal{O}}
\newcommand{\Ct}{\mathscr{C}}
\renewcommand{\phi}{\varphi}
\renewcommand{\t}{\tau}
\newcommand{\s}{\sigma}
\newcommand{\vr}{\varrho}
\newcommand{\Li}{{\rm Li}}
\newcommand{\im}{\operatorname{Im}}
\newcommand{\re}{\operatorname{Re}}
\newcommand{\sgn}{\operatorname{sgn}}
\newcommand{\PSL}{\operatorname{PSL}}
\newcommand{\SL}{\operatorname{SL}}
\newcommand{\Mod}[1]{\ (\text{mod}\ #1)}
\newcommand{\Red}{\mathrm{Red}}
\newcommand{\ol}[1]{\overline{#1}}
\newcommand{\ds}{\displaystyle}
\newcommand{\cycle}[1]{\left(\mkern-4mu\left(#1\right)\mkern-4mu\right)}
\newcommand{\cyclesq}[1]{\left[\mkern-2mu\left[#1\right]\mkern-2mu\right]}
\newtheorem{theorem}{Theorem}
\newtheorem{proposition}{Proposition}
\newtheorem*{corollary*}{Corollary}
\title{Arithmetic properties of the Herglotz function}
\date{}
\author[Danylo Radchenko]{Danylo Radchenko}
\address{ETH Zurich, Mathematics Department, R\"amistrasse 101, 8092 Z\"urich, 
Switzerland}
\email{danradchenko@gmail.com}
\author[Don~Zagier]{Don~Zagier}
\address{\parbox{\linewidth}{Max Planck Institute for Mathematics, Vivatsgasse 7, 
53111 Bonn, Germany\\ International Centre for Theoretical Physics, Strada Costiera 
11, 34151 Trieste, Italy}}
\email{dbz@mpim-bonn.mpg.de}
\begin{document}
\maketitle

\begin{abstract}
	In this paper we study two functions $F(x)$ and $J(x)$, originally found by 
	Herglotz in 1923 and later rediscovered and used by one of the authors in 
	connection with the Kronecker limit formula for real quadratic fields. 
    We discuss many interesting properties of these functions, including special 
    values at rational or quadratic irrational arguments as rational linear 
    combinations of dilogarithms and products of logarithms, functional equations 
    coming from Hecke operators, and connections with Stark's conjecture. We also 
    discuss connections with 1-cocycles for the modular group $\PSL(2,\ZZ)$.
\end{abstract}

\setcounter{tocdepth}{1}
\tableofcontents

\section{Introduction}
Consider the function
	\be \label{eq:jdef} 
	J(x)\=\int_0^1\frac{\log(1+t^x)}{1+t}\,dt\,,
	\ee
defined for $x>0$. Some years ago, Henri~Cohen \footnote{who learned it from 
H.~Muzzafar (Montreal), see~\cite{C}*{Ex.60, p.~902--903}} showed one of 
the authors the identity
	\[J(1+\sqrt{2})\=
	-\frac{\pi^2}{24}\+\frac{\log^2(2)}{2}
	\+\frac{\log(2)\log(1+\sqrt{2})}{2}\,.\]
In this note we will give many more similar identities, like 
	\[J(4+\sqrt{17})\=
    -\frac{\pi^2}{6}\+\frac{\log^2(2)}{2}\+\log(2)\log(4+\sqrt{17})\]
and
	\[J\bigg(\frac25\bigg)\=
	\frac{11\pi^2}{240}\+\frac{3\log^2(2)}{4}
	\m2\log^2\bigg(\frac{\sqrt{5}+1}{2}\bigg)\,.\]
We will also investigate the connection to several other topics, such as the 
Kronecker limit formula for real quadratic fields, Hecke operators, Stark's 
conjecture, and cohomology of the modular group $\PSL_2(\ZZ)$.

The function $J(x)$ can be expressed via the formula
	\be \label{eq:jfrel}
	J(x)\=F(2x)\m2F(x)\+F(x/2)\+\frac{\pi^2}{12x}
	\ee
in terms of the function 
	\be\label{eq:fdef}
	F(x) \= \sum_{n=1}^{\infty}\frac{\psi(nx)-\log(nx)}{n}
	\qquad \bigl(x\in\CC'\coloneqq \CC\sm (-\infty,0]\,\bigr)\,,
	\ee
which is therefore in some sense more fundamental. 
Here $\psi(x) = \frac{\Gamma'(x)}{\Gamma(x)}$ is the digamma function.
The function~$F$ was introduced in~\cite{Z1}, where it was shown, in
particular, that it satisfies the two functional equations
	\begin{align}
	\label{eq:3term} 
	\begin{split}
		F(x)\m F(x+1)\m F\Big(\frac{x}{x+1}\Big)
		&\= -F(1)\+\Li_2\Big(\frac{1}{1+x}\Big)\,,\\
		F(x)\+F\Big(\frac{1}{x}\Big)&\=2F(1)\+\frac{\log^2 x}{2}
		\m\frac{\pi^2(x-1)^2}{6x}\,,
	\end{split}
	\end{align}
for all $x\in\CC'=\CC\sm(-\infty,0]$, where $\Li_2(x)$ is Euler's dilogarithm 
function. A function essentially equivalent to $F(x)$ was also studied by Herglotz 
in~\cite{H} and for this reason we will call $F$ the Herglotz function.
In fact, Herglotz also introduced the function $J(x)$ and found explicit 
evaluations~\cite[Eq.(70a-c)]{H} of $J(x)$ for $x=4+\sqrt{15}$, $6+\sqrt{35}$, 
and $12+\sqrt{143}$. Several other identities of this kind were found by 
Muzzafar and Williams~\cite{MW}, together with some sufficient conditions on~$n$
under which one can evaluate $J(n+\sqrt{n^2-1})$. We give a more systematic treatment 
of identities of this type in Section~\ref{sec:quadratic}.

\section{Elementary properties}
\subsection*{\indent Integral representations}
Using the well-known integral formula for $\psi(x)$ (see~\cite[p.~247]{WW})
	\[\psi(x)\=\int_{0}^{\infty}
	\Big(\frac{e^{-t}}{t}-\frac{e^{-xt}}{1-e^{-t}}\Big)\,dt\,,\]
we see that the function $F$ also has the integral representations
	\be \label{eq:integral}
		F(x) \= \int_0^\infty  
		\Big(\frac{1}{1-e^{-t}}-\frac{1}{t}\Big)
		\log(1-e^{-tx})\,dt 
		\= \int_0^1 
		\Big(\frac{1}{1-y}+\frac{1}{\log y}\Big)
		\log(1-y^x)\,\frac{dy}{y}\,.
	\ee
By differentiating the last integral and using the obvious identity
$\frac{d}{dx}\Li_m(y^x) = \Li_{m-1}(y^x)\log y$
we get also the following expression for the $n$-th derivative of~$F$
	\be
	\label{eq:integral_derivative}
	F^{(n)}(x)\=-\int_{0}^{1} \Big(\frac{1}{1-y} +\frac{1}{\log y}\Big)
	\log^n(y)\,\Li_{1-n}(y^x)\,\frac{dy}{y}\,.
	\ee
We will give another useful integral representation of~$F$ in 
Section~\ref{sec:period}.

\subsection*{\indent Asymptotic expansions.}
The Herglotz function has the asymptotic expansion
	\[F(x)\=F(1)\m\Li_2(1-x)
	\+\sum_{n=1}^{\infty}\frac{B_n\,\zeta(n+1)}{n}(x-1)^n(1-x^{-n})\]
as $x\to1$, the asymptotic expansion
	\[F(x) \;\sim\; -\frac{\pi^2}{12x} 
	\m\sum_{n=2}^{\infty}\frac{B_{n}\,\zeta(n+1)}{n}\,x^{-n}\,,\]
as $x\to\infty$, and the asymptotic expansion
	\[F(x) \;\sim\; -\frac{\zeta(2)}{x}\+\frac{\log^2x}{2}\+2\zeta(2)+2F(1)
	\+\sum_{n=1}^{\infty}\frac{B_{n}\,\zeta(n+1)}{n}\,x^{n}\,,\]
as $x\to 0$. In terms of the formal power series
	\[\mathfrak{Z}(x) \= 
	\sum_{n=1}^{\infty}\zeta(1-n)\,\zeta(1+n)\,x^n\;\in\RR[[x]]\,,\]
these can be rewritten as
	\be \label{eq:taylor}
	F(x) \;\sim\; 
    \begin{cases}
		\ds -\mathfrak{Z}(x)\+\frac{\log^2x}{2}\+2F(1)\m\zeta(2)(x-2+x^{-1})\,,
		\quad &x\to 0\,,\\[5pt]
		\ds \mathfrak{Z}\Big(\frac{1-x}{x}\Big)\m\mathfrak{Z}(1-x)\m\Li_2(1-x)+F(1)\,,
		\quad &x\to 1\,,\\[5pt]
		\ds \mathfrak{Z}\Big(\frac{1}{x}\Big)\,,
		\quad &x\to \infty\,.
	\end{cases}
	\ee
The expansion at $x\to\infty$ follows from the asymptotic expansion for the digamma 
function $\psi(z)\sim \log z+\sum_{n=1}^{\infty}\zeta(1-n)z^{-n}$, while the two 
other expansions can be derived easily from the expansion at $\infty$ using the 
functional equations~\eqref{eq:3term}.

\subsection*{\indent Analytic continuation.}
The function $F$ is analytic in the cut complex plane~$\CC'$, because the 
series~\eqref{eq:fdef} converges locally uniformly there. From the formula
$\psi'(z) = \sum_{m=0}^{\infty}(z+m)^{-2}$ we get a convergent series 
representation for $F'$ in $\CC'$
	\be \label{eq:fdoublesum}
	F'(x)\= \sum_{n=1}^{\infty}
    \bigg(\sum_{m=0}^{\infty}\frac{1}{(nx+m)^2}-\frac{1}{nx}\bigg)\,.
	\ee
	
Let us look at this from a different point of view. The change of 
variables~$y\mapsto x=(\frac{1-y}{1+y})^2$ maps the unit disk $\{y\colon|y|<1\}$ 
bijectively and conformally onto $\CC'$. The analyticity of $F$ in $\CC'$ together 
with the second formula in~\eqref{eq:taylor} then implies that the power series
	\be \label{eq:gformula}
	G(y)\coloneqq F\Big(\Big(\frac{1-y}{1+y}\Big)^2\Big)-F(1)
	\+\Li_2\Big(\frac{4y}{(1+y)^2}\Big)
	\= \mathfrak{Z}\Big(\frac{4y}{(1-y)^2}\Big)
	\m\mathfrak{Z}\Big(\frac{4y}{(1+y)^2}\Big)
	\ee
has radius of convergence $\ge 1$. The power series $\mathfrak{Z}(y)$ is factorially 
divergent, but we can show directly that the difference of two $\mathfrak{Z}$'s 
appearing on the right-hand side of~\eqref{eq:gformula} is convergent in the unit 
disk by the following argument. First, note that $\mathfrak{Z}(x)+\zeta(2)x$ 
is equal to $-\sum_{n\ge 1}\frac{\gamma_0(x/n)}{n}$. 
Here $\gamma_0(x)=\sum_{n\ge 1}\frac{B_m}{m}x^m$ is the formal power series 
discussed in detail in~\cite[(A.13)]{Z5}, where it was shown that it satisfies
	\[\gamma_0\Big(\frac{x}{1-x}\Big)-\gamma_0(x) \= \log(1-x)+x\,.\]
From this we obtain
	\be \label{eq:gformula2}
	G(y)+\zeta(2)\frac{16y^2}{(1-y^2)^2}
	\,=\, \sum_{n=1}^{\infty}\frac{1}{n}\sum_{j=0}^{n-1}
	\phi\Big(\frac{\frac{4}{n}y}{1-2(\frac{2j}{n}-1)y+y^2}\Big)\,,
	\;\quad \phi(x)\coloneqq-\log(1-x)-x\,.
	\ee
Using $\phi(x)=\int_{0}^{1}(\frac{x}{1-tx}-x)dt$ and the generating series 
$\sum_{m\ge 0}U_m(t)\,x^m = \frac{1}{1-2tx+x^2}$ for Chebyshev polynomials 
of the second kind, we obtain
	\bes
	[y^{m+1}]\biggl( G(y)+\zeta(2)\frac{16y^2}{(1-y^2)^2}\biggl)
	\= \sum_{n=1}^{\infty}\frac{2}{n}
	\biggl[\int_{-1}^{1}U_m(t)\,dt
	-\frac{2}{n}\sum_{j=0}^{n-1}U_m\biggl(\frac{2j}{n}-1\biggr)\biggr]\,.
	\ees
Now breaking up the integral into~$n$ integrals over intervals of length~$2/n$ 
and using the estimate $|U_m'(t)|\le4m(m+1)$, $t\in[-1,1]$, we find that the 
right hand side is $O(m^2)$, and hence that the series $G(y)$ indeed has 
radius of convergence at least~$1$.
	
\subsection*{\indent Relation to the Dedekind eta function.}
The analytic extension of $F$ to $\CC'$ satisfies another important functional 
equation. To state it, and for later purposes, it is convenient to introduce the 
slightly modified function
	\be
	\label{eq:fbetter}
	\FR(x) \= F(x)-F(1)
	\+\frac{\pi^2}{12}\big(x-2+x^{-1}\big)\m\frac{\log^2 x}{4}\,.
	\ee
\begin{proposition}
	For any~$z\in\Hf=\{z\colon \im(z)>0\}$ we have
	\[\FR(-z)\m\FR(z)\= 2\pi i\log((z/i)^{1/4}\eta (z))\,,\]
	where $\eta(z) = e^{\pi iz/12}\prod_{n\ge1}(1-e^{2\pi inz})$ is
	the Dedekind eta function, and the branch of
	$\log((z/i)^{1/4}\eta(z))$ is chosen to be real-valued on the 
	imaginary axis.
\end{proposition}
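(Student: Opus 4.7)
The plan is to show both sides of the claimed identity are holomorphic on $\Hf$, have the same $z$-derivative, and share the same asymptotic behaviour as $z\to i\infty$, so that their difference is a holomorphic constant equal to zero. Taking the $z$-derivative on the right-hand side via the Fourier expansion $\log\eta(z)=\frac{\pi iz}{12}+\sum_{n\ge1}\log(1-q^n)$, with $q=e^{2\pi iz}$, yields
\[
\frac{d}{dz}\bigl[2\pi i\log((z/i)^{1/4}\eta(z))\bigr] \= \frac{\pi i}{2z} \m \frac{\pi^2}{6} \+ 4\pi^2\sum_{n=1}^{\infty}\frac{nq^n}{1-q^n}.
\]

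For the left-hand side, differentiating \eqref{eq:fbetter} and using $\log(-z)-\log(z)=-i\pi$ for $z\in\Hf$ (with the principal branch on $\CC'$) gives
\[
\frac{d}{dz}\bigl[\FR(-z)-\FR(z)\bigr] \= -F'(-z)\m F'(z) \m \frac{\pi^2}{6} \+ \frac{\pi^2}{6z^2} \+ \frac{i\pi}{2z},
\]
so matching the two derivatives reduces to proving $F'(z)+F'(-z)=\frac{\pi^2}{6z^2}-4\pi^2\sum_{n\ge1}\frac{nq^n}{1-q^n}$. Starting from $F'(x)=\sum_{n\ge1}\bigl(\psi'(nx)-\frac{1}{nx}\bigr)$ underlying \eqref{eq:fdoublesum}, the correction terms $\frac{1}{nz}$ cancel in $F'(z)+F'(-z)$, and the reflection formula $\psi'(w)+\psi'(-w)=\frac{\pi^2}{\sin^2(\pi w)}+\frac{1}{w^2}$ reduces the sum to $\sum_{n\ge1}\frac{\pi^2}{\sin^2(\pi nz)}+\frac{\pi^2}{6z^2}$. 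Combining the closed form $\frac{\pi^2}{\sin^2(\pi w)}=-\frac{4\pi^2 e^{2\pi iw}}{(1-e^{2\pi iw})^2}$ with the divisor identity $\sum_{n\ge1}\frac{q^n}{(1-q^n)^2}=\sum_{n\ge1}\frac{nq^n}{1-q^n}$ then produces the desired expression.

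Finally, the additive constant is pinned down by letting $z=iT$ with $T\to+\infty$. On the left, $F(\pm iT)=O(1/T)$ by the expansion at $\infty$ in \eqref{eq:taylor}, while the remaining part of $\FR(-iT)-\FR(iT)$ evaluates to $-\frac{i\pi^2 T}{6}+\frac{i\pi}{2}\log T+O(1/T)$, the logarithmic term arising from $\log^2(-iT)-\log^2(iT)=-2\pi i\log T$. On the right, $\log\eta(iT)=-\pi T/12+o(1)$ and $\log(T^{1/4})=\frac{1}{4}\log T$ give exactly the same leading terms, so the constant is zero. The only delicate aspect in the argument is the careful bookkeeping of the branches of $\log$ on the lower half-plane $-z$ and of $(z/i)^{1/4}$; once these are fixed by the stated normalizations, everything else reduces to standard Lambert-series manipulations.
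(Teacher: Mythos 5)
Your proof is correct: the derivative computation, the trigamma reflection formula $\psi'(w)+\psi'(-w)=\pi^2/\sin^2(\pi w)+w^{-2}$, the Lambert-series identity $\sum_n q^n(1-q^n)^{-2}=\sum_n nq^n(1-q^n)^{-1}$, and the asymptotic matching at $z=iT$ all check out, and the branch bookkeeping ($\log(-z)=\log z-i\pi$ on $\Hf$, $\log^2(-iT)-\log^2(iT)=-2\pi i\log T$) is handled correctly. The paper's own proof is a one-liner that works one level down: it sums the \emph{digamma} reflection formula $\psi(-x)-\psi(x)=x^{-1}+\pi\cot(\pi x)$ term by term over $x=nz$ with weight $1/n$, so that $\sum_n\bigl(\pi\cot(\pi nz)+i\pi\bigr)/n=-2\pi i\sum_n\sigma_{-1}(n)q^n$ directly produces $2\pi i\log(q^{1/24}/\eta(z))$ without ever differentiating; the elementary terms in the definition of $\FR$ then account for $\pi^2 z/6$, $\pi^2/(6z)$, and $\tfrac{i\pi}{2}\log z+\tfrac{\pi^2}{4}=2\pi i\log((z/i)^{1/4})$ exactly. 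What your route costs is the extra step of pinning down the integration constant via the $z\to i\infty$ asymptotics (which you do correctly, though note you only need $F(\pm iT)\to0$, not the full expansion~\eqref{eq:taylor}); what it buys is that you never have to manipulate the conditionally delicate combination $\sum_n\bigl(\pi\cot(\pi nz)-\log(-nz)+\log(nz)\bigr)/n$, since after differentiation every series in sight converges absolutely. Either way the essential input is the same reflection formula, so the two arguments are close relatives rather than genuinely independent proofs.
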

\begin{proof}
	This follows from $\psi(-x)-\psi(x)=x^{-1}+\pi\cot(\pi x)$. 
    Compare~\cite[p.~27]{VZ}.
\end{proof}
As a corollary, $\FR(x)$ cannot be extended analytically across the negative real 
axis either from above or from below.
	
\subsection*{\indent Relation to the Rogers dilogarithm.}
The functional equation~\eqref{eq:3term} already shows that $F$ is intimately related 
to the dilogarithm function~$\Li_2$. If we use the modified function $\FR$ 
from~\eqref{eq:fbetter} instead, we get nicer identities involving the Rogers 
dilogarithm. Recall that the Rogers dilogarithm is defined by
	\be \label{eq:rogersdef1}
	L(x) \= \Li_2(x) + \frac{1}{2}\log(x)\log(1-x)\,,
	\quad x\in[0,1]\,,
	\ee
and extended to $\RR$ by
	\be \label{eq:rogersdef2}
	L(x) \= \begin{cases}
	\tfrac{\pi^2}{3} - L(\tfrac{1}{x})\,, 
	\quad x>1\,, \\
	-L(\tfrac{x}{x-1})\,,\quad\;\, x<0\,.\end{cases} 
	\ee
The key property of $L$ is that it defines a continuous map from 
$\mathbb{P}^1(\RR)$ to $\RR/\frac{\pi^2}{2}\ZZ$ that satisfies
	\be \label{eq:5term1}
	L(x)+L(y)-L(xy)-L\bigg(\frac{x(1-y)}{1-xy}\bigg)
	-L\bigg(\frac{y(1-x)}{1-xy}\bigg) \= 0 \Mod{\pi^2/2}\,.
	\ee
Now, if we use $\FR$ instead of $F$, then we get ``clean'' versions of the 
functional equation~\eqref{eq:3term} with the Rogers dilogarithm in place of~$\Li_2$: 
for $x>0$ we have
	\begin{align}
	\label{eq:3term2} 
	\begin{split}
		\FR(x) \m\FR(x+1) \m\FR\Big(\frac{x}{x+1}\Big)
		&\= L(1/2)-L\Big(\frac{x}{x+1}\Big)\,,\\
		\FR(x) \+\FR\Big(\frac{1}{x}\Big)&\= 0\,.
	\end{split}
	\end{align}

\section{Functional equations related to Hecke operators}
\label{sec:funceq}
As it turns out, as well as the two functional equations~\eqref{eq:3term} 
the Herglotz function satisfies infinitely many further $1$-variable functional 
equations, one for each Hecke operator $T_n$. Before we make this statement precise, 
we look at a related elementary problem about relations satisfied by cotangent 
products.
	
\subsection{Identities between cotangent products.}
Let
	\[ C(x) \= \begin{cases}
		\cot \pi x\,,     \quad\; x\in\CC\sm\ZZ\,,\\
		\quad 0\,, \quad\quad\quad x\in\ZZ\,,
	\end{cases}  \]
so that~$C$ is periodic, vanishes on $\ZZ$, and is holomorphic away from $\ZZ$. 
Define 
	\[\Ct(x,y) \= C(x)C(y)+1\,,\] 
which will be our main interest in this subsection. Note that it is even, 
symmetric, and satisfies
	\be \label{eq:cotaneq}
	\Ct(x,y)\m\Ct(x,x+y)\m\Ct(x+y,y) \= 0
	\ee
whenever $xy(x+y)\ne 0$. (See~\cite{Z3} for an interpretation of this type of
functional equation in terms of the cohomology of~$\SL(2,\ZZ)$.) We are 
interested in more general relations of the form
	\[\sum_{i}\lambda_i \Ct(a_ix+b_iy,c_ix+d_iy) \= \mathrm{const}\,,\]
where $a_i,b_i,c_i,d_i$ are integers with $a_id_i-b_ic_i\ne0$.
	
Let~$\Gamma$ be the group $\PSL_2(\ZZ)$, which is generated by the 
matrices~$S=(\smat{0}{-1}{1}{0})$ and~$U=(\smat{1}{-1}{1}{0})$, with $S^2=U^3=1$,
and let~$T=US=(\smat{1}{1}{0}{1})$. For $n\in \NN$, let~$\MM_n$ be the set 
of~$2\times 2$ integer matrices of determinant~$n$, modulo $\{\pm1\}$, and 
let $\RM_n=\QQ[\MM_n]$. We denote both the element of~$\MM_n$ corresponding to a 
matrix $(\smat abcd)$ of determinant~$n$ and the corresponding basis element 
of~$\RM_n$ by $[\smat abcd]$. Define $\MM=\bigcup_{n\in\NN}\MM_n$ and set $\RM = 
\QQ[\MM] =\bigoplus_{n\in\NN}\RM_n$, which is a graded ring.
This ring acts on the right on the vector space of even meromorphic functions 
on $\CC^2$ by the formula
	\[\Bigl( f \circ \sum_{i}\lambda_i [\smat{a_i}{b_i}{c_i}{d_i}]\Bigr) 
      \bigl(x,y\bigr) \= \sum_{i} \lambda_i\,f(a_ix+b_iy,c_ix+d_iy)\,.\]
The same formula defines the action of $\RM$ on the space~$\An$
of even complex-valued functions on~$\CC^2/\ZZ^2$. We also define 
$\delta_1,\delta_2\in\An$ by
	\[\delta_1(x,y) \= C(x)\delta(y)\,,\qquad
	  \delta_2(x,y) \= \delta(x)\delta(y)\,, \]
where $\delta=\delta_{\ZZ}\colon\CC\to\{0,1\}$ is the characteristic function of 
$\ZZ$.
	
The functional equations that we are interested in are elements of the set
	\[\mathcal{I} \= \{\xi\in\RM \mid \Ct\circ\xi = \mathrm{const}\} \,,\]
which is a right ideal in $\RM$. First, we give an algebraic description
of~$\mathcal{I}$.
\begin{proposition}\label{prop:algcrit}
	An element $\xi$ of $\RM$ lies in the ideal $\mathcal{I}$ 
	if and only if 
	\[(1-S)\xi \,\in\, \ker(\Phi_1)\cap\ker(\Phi_2)\,,\]
	where $\Phi_i\colon\RM\to\QQ(u,v)\otimes\An$, $i=1,2$,
	are homomorphisms defined for $\gamma=[\smat abcd]\in\MM$ by
	\[\Phi_1(\gamma) \= \frac{1}{cu+dv}\otimes \delta_1\circ\gamma\,.
	\quad\quad 
    \Phi_2(\gamma) \= \frac{c\det(\gamma)^{-1}}{cu+dv}\otimes \delta_2\circ\gamma\,,\]
\end{proposition}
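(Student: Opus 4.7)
The plan is to reduce the question to a pole-cancellation problem for a meromorphic function on the compact complex torus $(\CC/\ZZ)^2$ and to match the resulting cancellation data to the two algebraic conditions. The first step exploits that $\Ct$ is almost $S$-invariant: since $C$ is odd, a direct computation gives $\Ct+\Ct\circ S = (C(x)C(y)+1)+(-C(x)C(y)+1)=2$, so $\Ct\circ(1+S)\xi = 2\,\varepsilon(\xi)$ (with $\varepsilon$ the augmentation) is automatically constant. Thus $\xi\in\mathcal{I}$ iff $\Ct\circ(1-S)\xi$ is constant, and since $\Ct\circ(1-S) = 2\,C(x)C(y)$ identically, writing $\eta=(1-S)\xi=\sum_j\mu_j[\smat{a_j}{b_j}{c_j}{d_j}]$ reduces the claim to deciding when the meromorphic function
\[
F_\eta(x,y)\;\coloneqq\;\sum_j \mu_j\,C(a_j x+b_j y)\,C(c_j x+d_j y)
\]
is constant.

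Next, $F_\eta$ descends to the compact complex torus $T=(\CC/\ZZ)^2$, and since a holomorphic function on $T$ is constant, constancy is equivalent to the vanishing of all principal parts of $F_\eta$. I would analyze these in two strata: codimension one, along the polar divisors $\{a_j x+b_j y\in\ZZ\}$ and $\{c_j x+d_j y\in\ZZ\}$; and codimension two, at the finitely many points where two such divisors cross. Near a generic point of the line $c_j x+d_j y=k\in\ZZ$, introducing a transverse parameter by moving in direction $(u,v)$ makes $C(c_j x+d_j y)$ contribute the simple pole $1/\bigl(\pi\epsilon(c_j u+d_j v)\bigr)$, so the local codimension-one principal part is $\frac{1}{\pi(c_j u+d_j v)}\,C(a_j x+b_j y)\,\delta(c_j x+d_j y)$ up to a universal scalar. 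Summing over $j$, the total codimension-one principal part of $F_\eta$, viewed as an element of $\QQ(u,v)\otimes\An$, equals $\Phi_1(\eta)$ up to a nonzero scalar; hence codimension-one cancellation is exactly $\Phi_1(\eta)=0$.

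For the codimension-two analysis, the local model at a crossing point is $C(L_j)C(M_j)\sim \frac{1}{\pi^2 L_j M_j}$, and after the change of variables $(x,y)\mapsto(L_j,M_j)$ with Jacobian $\det(\gamma_j)$ this contributes a Dirac-type residue proportional to $\delta(L_j)\delta(M_j)$. Summing these contributions in $\QQ(u,v)\otimes\An$ matches $\Phi_2(\eta)$ up to a nonzero scalar: the factor $c_j/\det(\gamma_j)$ absorbs the Jacobian, while the denominator $c_j u+d_j v$ records the transverse direction inherited from the codimension-one step. Combining the two vanishings shows $F_\eta$ is holomorphic on $T$ and hence constant, which gives both directions of the proposition.

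The main obstacle will be the bookkeeping in degenerate situations: when several $\gamma_j$ share the same primitive polar direction (so their residues combine on a single irreducible divisor), when a matrix is non-primitive (so $C(L_j)$ defines several divisors rather than one), or when several polar divisors meet at a common crossing point. The use of $\QQ(u,v)$ as coefficient ring is essential here, because distinct denominators $1/(c_j u+d_j v)$ are linearly independent over $\QQ$ precisely when the $(c_j,d_j)$ are pairwise non-proportional, which cleanly separates the codimension-one cancellation conditions by polar direction. A closely related subtlety is that the product $C(a x+b y)\,\delta(c x+d y)$ appearing in $\delta_1\circ\gamma$ is not defined a priori at crossings, so a regularization (principal value) must be fixed and shown to be compatible with the splitting of the total principal part into a $\Phi_1$-piece and a $\Phi_2$-piece.
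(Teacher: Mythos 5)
Your strategy is in essence the same as the paper's: expand the cotangent products in a generic direction $(u,v)$ near their singular locus, identify the first- and second-order singular parts with $\Phi_1\bigl((1-S)\xi\bigr)$ and $\Phi_2\bigl((1-S)\xi\bigr)$ (the coefficient field $\QQ(u,v)$ serving exactly to keep non-proportional polar directions from interfering), and conclude by showing that a pole-free $\Ct\circ\xi$ must be constant. Your preliminary observation that $\Ct\circ(1+S)=2$, so that only $\eta=(1-S)\xi$ matters, is a clean way of explaining where the factor $(1-S)$ comes from; the paper instead absorbs this into a single local expansion of $\Ct\circ\gamma$ at a point $(x_0+\eps u,y_0+\eps v)$. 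However, your concluding step is wrong as stated: $\CC/\ZZ$ is isomorphic to $\CC^{*}$ and is \emph{not} compact, so $(\CC/\ZZ)^2$ is not a compact complex torus and holomorphic functions on it need not be constant (e.g.\ $e^{2\pi i x}$). You need an input at infinity: since $C(z)=\cot\pi z\to\mp i$ as $\im z\to\pm\infty$, for fixed $y$ the pole-free function $F_\eta(\cdot,y)$ is holomorphic, $1$-periodic and bounded, hence constant by Liouville's theorem, and one then repeats the argument in $y$; this is precisely the paper's final step (equivalently, one may compactify each factor to $\PP^1$ via $q=e^{2\pi i z}$).

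A second, smaller gap is in the bookkeeping of the codimension-one part. Each term $C(a_jx+b_jy)\,C(c_jx+d_jy)$ has poles along \emph{two} families of divisors, but your computation only records those along $\{c_jx+d_jy\in\ZZ\}$, i.e.\ the ones visible to $\delta_1\circ\gamma_j$; the poles along $\{a_jx+b_jy\in\ZZ\}$ contribute $-\Phi_1(S\eta)$, so the total codimension-one principal part is $\Phi_1\bigl((1-S)\eta\bigr)$, which equals $2\Phi_1(\eta)$ only because $S\eta=-\eta$ for $\eta$ in the image of $1-S$. Similarly, at a crossing the double pole has denominator $(a_ju+b_jv)(c_ju+d_jv)$, and the identity $\frac{-v/n}{(au+bv)(cu+dv)}=\frac{c/n}{cu+dv}-\frac{a/n}{au+bv}$ is what converts this product into the simple denominators of $\Phi_2\bigl((1-S)\gamma_j\bigr)$; the resulting ``scalar'' relating the total codimension-two part to $\Phi_2(\eta)$ is $-2/v$, a nonzero element of $\QQ(u,v)$, which is harmless but should be stated. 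With these two repairs your argument closes and coincides with the proof in the paper.
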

\begin{proof}
	From the definitions of $C$ and $\delta$ it follows that
	\[C(x+\eps) \= \frac{\delta(x)}{\pi \eps}+C(x)+O(\eps)\,, 
	\qquad x\in\CC/\ZZ,\;\;\eps\to 0\,, \]
	where we recall that~$C|_{\ZZ}=0$. 
	If we fix $\xi_0=(x_0,y_0)$ and set~$(x,y)=(x_0+\eps u,y_0+\eps v)$, then 
	\[\Ct(ax+by,cx+dy) \= 
	 \frac{1}{\pi^2v\eps^2}\,\Phi_2\bigl((1-S)\gamma\bigr)(x_0,y_0)
	+\frac{1}{\pi\eps}\,\Phi_1\bigl((1-S)\gamma\bigr)(x_0,y_0) + O(1)\,.\]
	Thus the function~$f\coloneqq\Ct\circ\xi$ is continuous 
	if and only if $(1-S)\xi \in \ker(\Phi_1)\cap\ker(\Phi_2)$.
	When this happens, the function $f$ for fixed $y$ is holomorphic,
	$1$-periodic, and bounded when $|\im(x)|\to\infty$, and hence
	constant by Liouville's theorem. Applying the same argument to~$f$ as a 
	function of~$y$, we get that~$f(x,y)$ is constant.
%
\end{proof}
	
We now use the criterion in Propostion~\ref{prop:algcrit} to write down nontrivial 
elements in the ideal of relations $\mathcal{I}$ using Hecke operators. 
Following~\cite{CZ}, we say that an element $\widetilde T_n\in \RM_n$
($n\in\NN$) ``acts like the $n$-th Hecke operator on periods'' if it satisfies
	\be
	\label{eq:heckelike}(1-S)\widetilde T_n \= T_n^{\infty}(1-S)\+(1-T)Y\,,
	\ee
for some $Y\in\RM_n$, where $T_n^{\infty}$ is the usual representative of 
the $n$-th Hecke operator
	\[T_n^{\infty} \= 
	\sum_{ad=n}\sum_{0\le b<d}\bmat ab0d \;\in\; \QQ[\MM_n]\,.\]
It was shown in~\cite{CZ} that such elements exist for every $n\in\NN$
and satisfy $r_f|_{2-k}\widetilde{T}_n = r_{f|_{k}T_n}$ for the period 
polynomial $r_f$ of any holomorphic modular form of weight~$k$, whence the name.

\begin{theorem} \label{thm:cotangentfe}
	Suppose that $\widetilde T_n\in \RM_n$ acts like the $n$-th Hecke operator on 
    periods. Then
	\be \label{eq:cotanfeq}
	(\Ct \circ \widetilde T_n)(x,y) 
    \m \sum_{l|n}l\,\Ct(lx,ly) \= c(\widetilde{T}_n)\,, 
    \ee
	where $c\colon\RM\to\ZZ$ is the group homomorphism defined on generators by
	\bes
	\label{eq:constvalue}
	\bmat{a}{b}{c}{d}
	\;\mapsto\;
	\begin{cases}
		1-\sgn(a+b)\,\sgn(c+d)\,, & a+b\ne 0\,,\;\;c+d\ne 0\,,\\
		1-\sgn(a+b)\,\sgn(d)\,,   & a+b\ne 0\,,\;\;c+d  = 0\,,\\
		1-\sgn(b)\,\sgn(c+d)\,,   & a+b  = 0\,,\;\;c+d\ne 0\,.
	\end{cases}
	\ees
\end{theorem}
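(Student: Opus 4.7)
The plan is to apply Proposition~\ref{prop:algcrit} to the element
\[\xi \coloneqq \widetilde T_n \m \sum_{l|n}l\,\bmat{l}{0}{0}{l} \;\in\; \RM,\]
so that identity~\eqref{eq:cotanfeq} becomes the assertion that $\Ct\circ\xi$ is constant, with value $c(\widetilde T_n)$. Since $c(\bmat{l}{0}{0}{l}) = 1-\sgn(l)^2 = 0$ for every $l\ge 1$, we have $c(\xi) = c(\widetilde T_n)$, so it is equivalent to show that $\Ct\circ\xi$ is constant and equals $c(\xi)$.

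For the constancy, I would expand
\[(1-S)\xi \= T_n^{\infty}(1-S) \+ (1-T)Y \m (1-S)\sum_{l|n}l\bmat{l}{0}{0}{l}\]
using~\eqref{eq:heckelike} and verify that each piece lies in $\ker(\Phi_1)\cap\ker(\Phi_2)$. The term $(1-T)Y$ is automatic: left multiplication by $T$ sends $\bmat{a}{b}{c}{d}$ to $\bmat{a+c}{b+d}{c}{d}$, preserving the bottom row (hence the denominator $cu+dv$), while $\ZZ$-periodicity of $C$ and $\delta$ gives $\delta_i\circ T\gamma = \delta_i\circ\gamma$ on the support of the $\delta$-factor, so $\Phi_i(T\gamma) = \Phi_i(\gamma)$ and therefore $\Phi_i((1-T)Y) = 0$. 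The remaining identity
\[\Phi_i\bigl(T_n^{\infty}(1-S)\bigr) \= \Phi_i\Bigl((1-S)\sum_{l|n}l\bmat{l}{0}{0}{l}\Bigr) \qquad (i=1,2)\]
I would verify by direct calculation using the upper-triangular representatives $\bmat{a}{b}{0}{d}$ of $T_n^{\infty}$ and their $S$-translates $\bmat{b}{-a}{d}{0}$; after restricting to the support of $\delta(dy)$ (respectively $\delta(dx)$) and grouping the sum over $b\in\{0,\ldots,d-1\}$ by $g=\gcd(r,d)$ for $y\equiv r/d\pmod{1}$, the cotangent distribution identity $\sum_{k=0}^{m-1}\cot(\pi(z+k/m)) = m\cot(\pi mz)$ collapses the inner cotangent sum to a single $C((n/g)x)$, matching term by term the contribution from $l=n/g$ on the scalar-matrix side.

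To determine the value of the constant, I would evaluate $\Ct\circ\xi$ at $(x,y) = (it,\,it(1+\eta))$ for a sufficiently small fixed $\eta>0$ and let $t\to+\infty$. Since $C(z)\to -i\,\sgn(\im z)$ as $|\im z|\to\infty$, for each $\gamma=\bmat{a}{b}{c}{d}\in\MM$ we obtain
\[\lim_{t\to\infty}\Ct(ax+by,\,cx+dy) \= 1 \m \sgn(a+b+b\eta)\,\sgn(c+d+d\eta)\,,\]
which agrees with $c(\gamma)$ in each of the three defined cases (the fourth case $a+b=c+d=0$ forces $\det\gamma=0$ and cannot arise). Meanwhile, for each $l\mid n$ one has $\Ct(lit,\,lit(1+\eta)) \to (-i)(-i)+1 = 0$, so the scalar-matrix contribution vanishes in the limit and we read off $\Ct\circ\xi = c(\widetilde T_n)$.

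The principal obstacle will be the combinatorial verification of the identity $\Phi_i(T_n^{\infty}(1-S)) = \Phi_i((1-S)\sum_{l|n}l\bmat{l}{0}{0}{l})$; the precise synchronization between the sum over $b\bmod d$ with $\gcd$-conditions on the Hecke side and the divisor sum over $l\mid n$ on the scalar side, mediated by the cotangent distribution relation, must be carried out uniformly for both $\Phi_1$ and $\Phi_2$.
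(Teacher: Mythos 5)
Your proposal is correct and follows essentially the same route as the paper's proof: reduce to Proposition~\ref{prop:algcrit}, kill the $(1-T)Y$ term by periodicity of $\delta_1,\delta_2$, match $\Phi_i(T_n^{\infty}(1-S))$ with $\Phi_i\bigl(\sum_{l|n}l[\smat l00l](1-S)\bigr)$ via the cotangent distribution relation $\sum_{j=0}^{m-1}C(x+j/m)=mC(mx)$, and then evaluate the constant along $x=it$, $y=i(1+\eta)t$ as $t\to+\infty$. The only difference is presentational (you package everything into a single element $\xi$ and note $c([\smat l00l])=0$), and the combinatorial step you flag as the main obstacle is exactly the computation the paper carries out explicitly with the function $\mathrm{den}(y,d')$.
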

\begin{proof}
	We apply the criterion from Proposition~\ref{prop:algcrit}. 
	Since $\delta_1\circ(1-T)=\delta_2\circ(1-T)=0$, the subspace
	$(1-T)\RM_n$ lies in both $\ker\Phi_1$ and $\ker\Phi_2$,
	and therefore $\Phi_i((1-S)\widetilde{T}_n) = \Phi_i(T_n^{\infty}(1-S))$. 
    Next, we calculate
		\begin{align*}
		n^{-1}\sum_{ad=n}\sum_{0\le b<d}\delta(ax+by)\delta(dy) \=
		\sum_{ad'd''=n}(ad')^{-1}\mathrm{den}(y,d')\delta(ad'x) \\
		\=\sum_{d''|n}d''n^{-1}\delta(xn/d'')\delta(yn/d'') \=
		\sum_{l|n}l^{-1}\delta(lx)\delta(ly)
		\end{align*}
	and 
		\begin{align*}
		\sum_{ad=n}\sum_{0\le b<d}
		d^{-1}C(ax+by)\delta(dy) \=
		\sum_{ad'd''=n}\mathrm{den}(y,d')C(ad'x) \\
		\=\sum_{d''|n}C(xn/d'')\delta(yn/d'') \=
		\sum_{l|n}C(lx)\delta(ly)\,,
		\end{align*}
	where $\mathrm{den}(x,d)$ is the function that equals $1$ if $x$ is a rational 
	number with denominator $d$, and~$0$ otherwise. (We have used the 
	identity $\sum_{j=0}^{m-1}C(x+j/m)=mC(mx)$.) Using these we compute
		\begin{align*}
		\Phi_1\big(T_n^{\infty}(1-S)\big) \= 
		\sum_{ad=n}\sum_{0\le b<d}
		\Big(\frac{d^{-1}C(ax+by)\delta(dy)}{v}
		-\frac{d^{-1}C(bx-ay)\delta(dx)}{u}\Big)\\
		\= \sum_{l|n}
		l\,\Big(\frac{C(lx)\delta(ly)}{lv}-\frac{C(-ly)\delta(lx)}{lu}\Big)
		\= \Phi_1\big(\sum_{l|n}l(\smat l00l)(1-S)\big)\,.
		\end{align*}
	and
		\begin{align*}
		\Phi_2\big(T_n^{\infty}(1-S)\big) \= 
		n^{-1}\sum_{ad=n}\sum_{0\le b<d}
		\Big(-\frac{\delta(bx-ay)\delta(dx)}{u}\Big) 
		\= \sum_{l|n}l^{-1} \Big(-\frac{\delta(-ly)\delta(lx)}{u}\Big)\\
		\= \Phi_2\big(\sum_{l|n}l(\smat l00l)(1-S)\big)
		\end{align*}
	This implies that the left-hand side of~\eqref{eq:cotanfeq} is constant.
	To get the stated value of this constant we set $x=it$, $y=i(1+\eps)t$, 
	and take the limit of the LHS in~\eqref{eq:cotanfeq} 
    as $t\to+\infty$, $\eps\to 0+$.
\end{proof}

\subsection{Functional equations for $\FR$.}
We will now use the cotangent functional equations of Theorem~\ref{thm:cotangentfe} 
to obtain functional equations for the Herglotz function. Recall the standard slash 
action of $\RM$ in even weight $k$, defined on the 
generators $\gamma=(\smat abcd) \in \MM$ by
	\[(f|_k\gamma)(x) \= 
	\frac{(ad-bc)^{k/2}}{(cx+d)^k}\,f\Bigl(\frac{ax+b}{cx+d}\Bigr)\,.\]
\begin{theorem} \label{thm:heckefeq}
    Suppose that $\widetilde T_n=\sum_{\gamma}\nu_{\gamma}\gamma$ acts 
    like the $n$-th Hecke operator on periods and that all matrices 
    in $\widetilde T_n$ have nonnegative entries. Then the modified Herglotz 
    function~$\FR$ satisfies
    	\be  \label{eq:heckefeq}
    	\bigl(\FR\bigl|_{0}\widetilde T_n\bigr)(x) \m \s(n)\FR(x)
    	\m \sum_{\gamma(\infty)\ne\infty} \nu_{\gamma}
    	L\bigl(\gamma(x)/\gamma(\infty)\bigr) 
    	\= A_n\log x+\mathrm{const}\qquad (x>0)\,,
    	\ee
    where $L(x)$ is the Rogers dilogarithm defined in equation~\eqref{eq:rogersdef1} 
    and $A_n = \frac{1}{2}\sum_{l|n}l\log(l^2/n)$.
\end{theorem}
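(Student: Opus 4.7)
The plan is to reduce \eqref{eq:heckefeq} to the cotangent functional equation \eqref{eq:cotanfeq} of Theorem~\ref{thm:cotangentfe} by differentiating once in $x$ and using a cotangent representation of $\FR'$. Setting
\[
E_n(x) \= (\FR|_0 \widetilde T_n)(x) \m \s(n)\FR(x)
\m \sum_{\gamma(\infty)\ne\infty}\nu_\gamma L(\gamma(x)/\gamma(\infty)) \m A_n\log x\,,
\]
it suffices to show that $E_n'\equiv 0$, since the resulting additive constant is then absorbed into the ``const.'' on the right-hand side. The hypothesis that every matrix in $\widetilde T_n$ has nonnegative entries ensures $\gamma(x)\in\CC'$ for every $x>0$, so each $\FR(\gamma(x))$ is well-defined and the chain-rule manipulations below make sense.

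The bridge between $\FR$ and $\Ct$ is the classical identity $\sum_{m\in\ZZ}(z+m)^{-2} = \pi^2/\sin^2(\pi z) = \pi^2\,\Ct(z,z)$ (from the partial-fraction expansion of $\cot$). Combined with \eqref{eq:fdoublesum} and the reflection $\psi'(z)+\psi'(1-z) = \pi^2/\sin^2(\pi z)$, this allows one to rewrite $\FR'(x)$, after incorporating the corrections in \eqref{eq:fbetter}, as a regularized series in $\Ct(nx,nx)$ plus elementary terms. Substituting this representation into $E_n'(x)$, using the chain rule $(\FR(\gamma x))' = n(cx+d)^{-2}\FR'(\gamma x)$, and expanding the dilogarithm derivatives via $L'(u) = -\tfrac12\bigl(u^{-1}\log(1-u) + (1-u)^{-1}\log u\bigr)$, the vanishing of $E_n'$ becomes a one-variable specialization of the two-variable identity \eqref{eq:cotanfeq} along the diagonal $y = x$ (or an appropriate limit). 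The ``constant'' $c(\widetilde T_n)$ on the right-hand side of \eqref{eq:cotanfeq} and the correction terms from \eqref{eq:fbetter} conspire to produce the $A_n/x$-piece that cancels the derivative of $A_n\log x$; the specific value $A_n = \tfrac12\sum_{l\mid n} l\log(l^2/n)$ is then forced by tracking the logarithmic contributions coming from the diagonal $\Ct(lx,lx)$-terms together with the $(\log z)/(2z)$-correction evaluated at $z = \gamma(x)$ for each coset representative in~$\widetilde T_n$.

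The principal obstacle is the regularization needed to express $\FR'$ as a cotangent sum, since the naive rewriting of $\sum_n\psi'(nx)$ in terms of $\Ct(nx,nx)$ is only conditionally convergent; the reordering must be done compatibly with the finite Hecke sum in $\widetilde T_n$ so that the two conditionally convergent series on either side of \eqref{eq:heckefeq} match term by term. A secondary difficulty is the combinatorial bookkeeping required to check that the rational-function contributions from $L'$ at the non-cusp matrices (those with $\gamma(\infty)\ne\infty$) assemble with the elementary pieces coming from the matrices with $\gamma(\infty)=\infty$ into exactly the constant $c(\widetilde T_n)$ predicted by \eqref{eq:cotanfeq}, independently of the choice of the auxiliary element $Y$ in \eqref{eq:heckelike}.
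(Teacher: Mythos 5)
Your overall strategy---prove the derivative identity first and deduce the statement by integration, using Theorem~\ref{thm:cotangentfe} as the key input---matches the paper's plan, and your remark that the nonnegativity hypothesis forces $c(\widetilde T_n)=0$ is on target. But the mechanism you propose for connecting $\FR'$ to the cotangent identity does not work, and this is a genuine gap rather than a detail. First, the series \eqref{eq:fdoublesum} for $F'$ involves the \emph{one-sided} sums $\psi'(nx)=\sum_{m\ge0}(nx+m)^{-2}$, not the bilateral sums $\sum_{m\in\ZZ}(nx+m)^{-2}=\pi^2/\sin^2(\pi nx)$; the reflection formula $\psi'(z)+\psi'(1-z)=\pi^2/\sin^2(\pi z)$ introduces a second family of terms $\psi'(1-nx)$ that is not present in $F'$ and does not cancel. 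Second, and more fatally, specializing \eqref{eq:cotanfeq} to the diagonal $y=x$ yields an identity among the functions $\cot(\pi(a+b)x)\cot(\pi(c+d)x)+1$ and $1/\sin^2(\pi lx)$, i.e.\ trigonometric functions of \emph{integer multiples} of $x$. No such specialization can produce the M\"obius-transformed arguments $\gamma(x)=\frac{ax+b}{cx+d}$ that appear in $(\FR|_0\widetilde T_n)(x)$: the two variables must survive so that they can enter as a \emph{ratio}.

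The missing idea is a Mellin-transform step. The paper evaluates the cotangent identity at the purely imaginary point $(ixt,iyt)$ (where $\Ct$ becomes a combination of $\frac{1}{e^{2\pi xt}-1}$, $\frac{1}{e^{2\pi yt}-1}$ and their product), integrates against $t^s\,dt$, and reads off the constant term of the Laurent expansion at $s=1$. The homogeneity of the transform is exactly what converts the linear substitution $(x,y)\mapsto(ax+by,cx+dy)$ into the M\"obius action on the ratio $x/y$: one gets $y^{-s-1}A(x/y,s)$ plus $\Gamma(s+1)\zeta(s+1)$ terms, where $A(x,s)=\int_0^\infty\frac{t^s\,dt}{(e^{xt}-1)(e^t-1)}$ has constant term $F'(x)-\frac{\log x}{x}-\frac{\zeta(2)}{x^2}$ at $s=1$. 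This produces the derivative identity
\[\sum_{\gamma}\nu_\gamma\Bigl((\FR'|_2\gamma)(x)-n\tfrac{\log((ax+b)(cx+d))}{2(ax+b)(cx+d)}\Bigr)
\=\sum_{l|n}\tfrac{n}{l}\Bigl(\FR'(x)-\tfrac{\log(l^2x)}{2x}\Bigr)\,,\]
which is then integrated; the Rogers dilogarithms arise as primitives of the logarithmic terms, and the $A_n\log x$ term comes from the residual $\log^2$ contributions via the homomorphism $\gamma\mapsto[\gamma^{-1}(0)]-[\gamma^{-1}(\infty)]$ (which sends $\widetilde T_n$ to $\s(n)([0]-[\infty])$), not from $c(\widetilde T_n)$, which vanishes here. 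Without some substitute for this Mellin step, your reduction to \eqref{eq:cotanfeq} cannot be completed.
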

\begin{proof}
We use the same method as in~\cite{Z1}, 
namely, to characterize $F'(x)$ by the equation
	\[ A(x,s) \= \frac{x^{-1}}{s-1} 
    \+ \Big(F'(x)-\frac{\log x}{x}-\frac{\zeta(2)}{x^{2}}\Big) \+ O(s-1)\,, \]
where $A(x,s)$ is the function defined for $\re x>0$, $\re s>1$ by
	\[A(x,s) \= \int_{0}^{\infty}\frac{t^s\,dt}{(e^{xt}-1)(e^t-1)}\,.\]
Now, for $x,y>0$ we calculate
	\begin{align*}
	-\frac{(2\pi)^{s+1}}{4}\int_{0}^{\infty}\Ct(ixt,iyt)\,t^{s}\,dt
	&\= y^{-s-1}A(x/y,s) 
	+\tfrac{1}{2}\,\Gamma(s+1)\,\zeta(s+1)\,(x^{-s-1}+y^{-s-1}) \\
	&\= \frac{(xy)^{-1}}{s-1}
	+ y^{-2}F'\Big(\frac{x}{y}\Big)
	+\frac{\pi^2}{12}\Big(\frac{1}{y^{2}}-\frac{1}{x^{2}}\Big)
	-\frac{\log x}{xy} + O(s-1)\\
	&\= \frac{(xy)^{-1}}{s-1}
	+y^{-2}\FR'\Big(\frac{x}{y}\Big)-\frac{\log(xy)}{2xy} + O(s-1) \,.
	\end{align*}
(Note that $\Ct(ixt,iyt)$ is exponentially small for $t$ large.)
Since all entries of matrices in $\widetilde T_n$ are nonnegative,
we have $c(\widetilde T_n)=0$ (where $c$ is the homomorphism defined in 
Theorem~~\ref{thm:cotangentfe}), and equation~\eqref{eq:cotanfeq} then implies that 
for $x>0$
	\[\sum_{\gamma=(\smat abcd)\in\MM_n} \nu_{\gamma}\,
	\bigg((\FR'|_{2}\gamma)(x) - n\frac{\log((ax+b)(cx+d))}{2(ax+b)(cx+d)}\bigg)
    \= \sum_{l|n}\frac{n}{l} \Big(\FR'(x) - \frac{\log(l^2x)}{2x}\Big)\,.\]
Next, we integrate the above identity. The primitive of $(\FR'|_{2}\gamma)(x)$
is $\FR(\gamma x)$, while for the logarithmic term we use (assuming that $ad-bc=n$)
	\[n\int\frac{\log((ax+b)(cx+d))}{2(ax+b)(cx+d)}\,dx \= 
	L\Big(\frac{x+b/a}{x+d/c}\Big)
	+\frac{1}{4}\log^2(n(x+b/a))-\frac{1}{4}\log^2(n(x+d/c))\]
for $c>0$ and its variant
	\[n\int\frac{\log((ax+b)d)}{2(ax+b)d}\,dx\=
	\frac{1}{4}\log^2(n(x+b/a))\]
for $c=0$ (both statements are easily checked by differentiation).
Thus, modulo constants, the left-hand side of~\eqref{eq:heckefeq} 
is equal to
	\[\frac{1}{4}\bigg(\sum_{\gamma}\nu_{\gamma}\log^2\big(n(x-\gamma^{-1}(0))\big)
	-\sum_{\gamma^{-1}(\infty)\ne \infty}\nu_{\gamma}
        \log^2\big(n(x-\gamma^{-1}(\infty))\big)
	-\sum_{l|n}\frac{n}{l}\log^2(l^2x)\bigg) \,.\]
A simple calculation using~\eqref{eq:heckelike} shows that
the homomorphism $\phi\colon\RM\to\ZZ[\PP^1(\QQ)]$ defined on the 
generators by $\gamma\mapsto [\gamma^{-1}(0)]-[\gamma^{-1}(\infty)]$
satisfies $\phi(\widetilde{T}_n)=\s(n)([0]-[\infty])$. 
Thus, the above expression simplifies to
	\[\frac{1}{4}\Big(\s(n)\log^2(nx)-\sum_{l|n}\frac{n}{l}\log^2(l^2x)\Big)
	\= A_n\log x + \text{const}\,,\]
as claimed.
\end{proof}

The following proposition shows that a $\widetilde T_n$ consisting
of matrices with nonnegative entries exist for all~$n\in\NN$.
Similar elements were constructed by Manin~\cite{Ma} and others.
\begin{proposition}\label{prop:nonnegativehecke}
	The element $\widehat T_n\in\RM_n$ defined by
	\be\label{eq:tnhat}
	\widehat T_n \= \sum_{\substack{0\le c<a,\ 0\le b<d\\ ad-bc=n}}
	\bmat abcd \ee
	acts like the $n$-th Hecke operator on periods, i.e., there exists an element 
	$Y_n\in\RM_n$ such that
	\[(1-S)\widehat T_n \= T_n^{\infty}(1-S)+(1-T)Y_n\,.\]
\end{proposition}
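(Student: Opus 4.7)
The plan is to construct $Y_n$ explicitly via the Euclidean algorithm and then verify the equation $(1-S)\widehat T_n = T_n^\infty(1-S) + (1-T)Y_n$ by direct matching in $\RM_n$. Observe first that the matrices of $\widehat T_n$ with $c=0$ coincide with those of $T_n^\infty$: the conditions $0 \le c < a$ and $ad - bc = n$ with $c = 0$ force $a \mid n$, $d = n/a$, and $0 \le b < d$. Writing $\widehat T_n = T_n^\infty + R_n$, where $R_n$ collects the matrices with $c > 0$, the identity to prove becomes
\[
(1-T)Y_n \;=\; R_n - SR_n - ST_n^\infty + T_n^\infty S.
\]

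For each $\gamma = [\smat abcd] \in R_n$ (so $0 < c < a$, $0 \le b < d$, $ad - bc = n$), the elementary identity
\[
[\smat abcd] - [\smat{a-c}{b-d}{c}{d}] \;=\; (1-T)\bigl(-[\smat{a-c}{b-d}{c}{d}]\bigr)
\]
reduces the upper-left entry $a$ modulo $c$ at the cost of an element of $(1-T)\RM_n$. Combined with the row swap $S[\smat{\alpha}{\beta}{c}{d}] = [\smat{-c}{-d}{\alpha}{\beta}]$, this runs the continued fraction algorithm on $a/c$ and produces a chain of matrices starting at $\gamma$ and terminating in a matrix with $c = 0$ (up to a final $S$-twist), i.e., essentially an element of $T_n^\infty$. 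Summing the intermediate ``$-[\smat{a-c}{b-d}{c}{d}]$'' terms over all $\gamma \in R_n$ and over all steps of the chains (with appropriate signs tracking the alternation of Euclidean and $S$-swap steps) defines the candidate $Y_n$.

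The telescoping then shows that $(1-T)Y_n$ equals $R_n$ plus a boundary contribution coming from the terminal matrices of each chain. The heart of the proof is to check that this boundary matches $-SR_n - ST_n^\infty + T_n^\infty S$ exactly. The main obstacle will be the combinatorial bookkeeping: one must track the signs arising from the $\pm 1$ identification in $\MM_n$ and from the alternating Euclidean/$S$-swap steps, and verify that terminal matrices of chains originating from different $\gamma \in R_n$ combine with the correct multiplicities to produce exactly $ST_n^\infty - T_n^\infty S$ on the boundary. An alternative route would be to check the relation on primes and invoke multiplicativity of Hecke operators, but establishing $\widehat T_m \widehat T_n = \widehat T_{mn}$ for this specific positive-entry representative appears to be no easier than the direct Euclidean argument above.
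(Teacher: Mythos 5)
Your setup is correct: the $c=0$ part of $\widehat T_n$ is exactly $T_n^\infty$, and the identity to be proved is equivalent to $R_n - SR_n - ST_n^\infty + T_n^\infty S \in (1-T)\RM_n$, where $R_n$ is the sum over the matrices with $c>0$. The one-step reduction $[\smat{a}{b}{c}{d}]-[\smat{a-c}{b-d}{c}{d}]=(1-T)\bigl(-[\smat{a-c}{b-d}{c}{d}]\bigr)$ is also correct. But what you have written is a plan, not a proof: the entire content of the proposition lies in the step you defer, namely checking that the terminal terms of your Euclidean chains reproduce $SR_n + ST_n^\infty - T_n^\infty S$ with the correct multiplicities, and you give no mechanism for carrying this out. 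Running the full continued-fraction algorithm on $a/c$ for each $\gamma$ produces a cascade of intermediate matrices whose entries no longer satisfy the inequalities defining $\widehat T_n$, and it is not evident (and you do not show) that the contributions from different $\gamma$ recombine into elements of $(1-T)\RM_n$ plus exactly the desired boundary. As it stands, the argument cannot be checked or completed without doing the real work, which you yourself identify as ``the main obstacle.''

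The paper's proof shows that no Euclidean chain is needed: a single $T$-reduction suffices. Working modulo $(1-T)\RM_n$, i.e.\ modulo the relation $[\smat{a}{b}{c}{d}]\equiv[\smat{a+kc}{b+kd}{c}{d}]$, one computes directly that $(1-S)\widehat T_n - T_n^\infty(1-S)$ is congruent to the sum of $[\smat{a}{b}{c}{d}]-[\smat{-b}{-a}{d}{c}]$ over all matrices of determinant $n$ with $0<c<a$, $0\le b<d$, and then observes that the explicit map $\iota\bigl([\smat{a}{b}{c}{d}]\bigr)=[\smat{\lceil a/c\rceil d-b}{\lceil a/c\rceil c-a}{d}{c}]$ is an involution of this index set satisfying $\iota(\gamma)\equiv[\smat{-b}{-a}{d}{c}]$, so the terms cancel in pairs. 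If you wish to pursue your route, you should either find such a pairing yourself or be prepared to carry out the full multiplicity bookkeeping for the continued-fraction cascade, which is considerably heavier than the involution argument.
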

\begin{proof}
Let us denote $\mathcal{P}_n = \{[\smat abcd]\in\MM_n\;|\; 0<c<a,\,0\le b<d\}$. 
It is easy to check that the map $\iota\colon\mathcal{P}_n\to\mathcal{P}_n$ given by
	\[\bmat abcd \;\mapsto\; 
	  \bmat {\lceil a/c\rceil d-b}{\lceil a/c\rceil c-a}{d}{c}\]
is a well-defined involution. Working modulo $(1-T)\RM_n$, 
which amounts to introducing the equivalence relation
$[\smat abcd] \equiv [\smat {a+kc}{b+kd}cd]$, $k\in\ZZ$, we calculate
    \[(1-S)\widehat T_n - T_n^{\infty}(1-S) \;\equiv\;
	\sum_{\substack{0<c<a\\ 0\le b<d}}\bmat abcd
    \m\sum_{\substack{0\le c<a\\ 0<b<d}}\bmat {-c}{-d}ab
	\= \sum_{\substack{0<c<a\\ 0\le b<d}}\left(\bmat abcd - \bmat{-b}{-a}dc\right),\]
where all the sums are over matrices of determinant~$n$, and to get the last equality 
we made the substitution $a\leftrightarrow d, b\leftrightarrow c$ 
in the second sum. Since $\iota([\smat abcd])\equiv [\smat {-b}{-a}dc]$,
the terms in the last sum cancel in pairs (modulo $(1-T)\RM_n$), 
and we conclude that $(1-S)\widehat T_n - T_n^{\infty}(1-S) \in (1-T)\RM_n$.
\end{proof}
An essentially equivalent version of~$\widehat T_n$ (with inequalities for rows 
instead of columns) was given by Merel in~\cite{Me}. Here are the first three 
nontrivial examples
\begin{align*}
	&\widehat T_2 \= \bmat 1002 + \bmat 1102 +\bmat 2001+\bmat 2011,\qquad
	\widehat T_3 \= \sum_{j=0}^{2}\bmat 1j03 +\sum_{j=0}^{2}\bmat 30j1 
	+\bmat 2112,\\
	&\widehat T_4 \= \sum_{j=0}^{3}\bmat 1j04 +\sum_{j=0}^{3}\bmat 40j1 
	+\bmat 2002+\bmat 2012+\bmat 2102+\bmat 2213+\bmat 3122.
\end{align*}

For the special case $\widetilde{T}_n=\widehat{T}_n$, Theorem~\ref{thm:heckefeq} 
gives the following explicit functional equation.
\begin{corollary*} \label{cor:heckefeqex}
	For all $n\in\NN$ and $x>0$ the modified Herglotz function~$\FR$ satisfies
   	\be \label{eq:heckefeqex}
   	\bigl(\FR\bigl|_{0}\widehat T_n\bigr)(x) \m \s(n)\FR(x)
   	\= \sum_{\substack{0<c<a\\ 0\le b<d\\ ad-bc=n}} 
   	\biggl[L\biggl(\frac{x+b/a}{x+d/c}\biggr)
          -L\biggl(\frac{1+b/a}{1+d/c}\biggr)\biggr] \+ A_n\log x\,, \ee
    where $L(x)$ is the Rogers dilogarithm, and 
    $A_n = \frac{1}{2}\sum_{l|n}l\log(l^2/n)$.
\end{corollary*}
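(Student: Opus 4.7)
The plan is to specialize Theorem~\ref{thm:heckefeq} to $\widetilde T_n = \widehat T_n$ and then pin down the unknown additive constant by evaluating at $x=1$. By Proposition~\ref{prop:nonnegativehecke} the element $\widehat T_n$ acts like the $n$-th Hecke operator on periods, and by construction each of its matrices has nonnegative entries, so Theorem~\ref{thm:heckefeq} applies with every coefficient $\nu_\gamma$ equal to~$1$.

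The first step is to rewrite the generic sum $\sum_{\gamma(\infty)\ne\infty} \nu_\gamma L(\gamma(x)/\gamma(\infty))$ from Theorem~\ref{thm:heckefeq} in the concrete form used in the corollary. For $\gamma=[\smat abcd]\in \widehat T_n$ one has $\gamma(\infty)=a/c$, so the condition $\gamma(\infty)\ne\infty$ is the same as $c>0$, and a short computation gives $\gamma(x)/\gamma(\infty)=(x+b/a)/(x+d/c)$. Substituting this into Theorem~\ref{thm:heckefeq} yields an identity of the shape
\[ \bigl(\FR|_0 \widehat T_n\bigr)(x) \m \s(n)\FR(x) \=
\sum_{\substack{0<c<a\\ 0\le b<d\\ ad-bc=n}}
L\Big(\frac{x+b/a}{x+d/c}\Big) \+ A_n \log x \+ C_n \]
for some constant $C_n$ independent of $x$.

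It remains to determine $C_n$, and this is where the real work lies. Setting $x=1$: since $\FR(1)=0$ by~\eqref{eq:fbetter} and $\log 1=0$, the identity above gives
\[ C_n \= \bigl(\FR|_0 \widehat T_n\bigr)(1) \m \sum_{0<c<a,\,0\le b<d,\,ad-bc=n} L\Big(\frac{1+b/a}{1+d/c}\Big), \]
so the corollary will follow as soon as I show that $(\FR|_0\widehat T_n)(1)=0$, i.e.\ that $\sum \FR((a+b)/(c+d))=0$ over the matrices of $\widehat T_n$. To establish this, the idea is to exhibit an involution on those matrices that sends $(a+b)/(c+d)$ to its reciprocal, so that the second line of~\eqref{eq:3term2}, namely $\FR(x)+\FR(1/x)=0$, collapses the sum to zero in pairs. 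The natural candidate is the swap $\iota\colon(a,b,c,d)\mapsto(d,c,b,a)$: it preserves the determinant and exchanges the two defining inequalities $0\le c<a$ and $0\le b<d$, hence maps $\widehat T_n$ to itself, and it clearly sends $(a+b)/(c+d)$ to $(c+d)/(a+b)$. Its fixed points satisfy $a=d$ and $b=c$, for which $(a+b)/(c+d)=1$, and $\FR$ already vanishes there. This gives $(\FR|_0\widehat T_n)(1)=0$, pins $C_n$ to the value stated in the corollary, and completes the proof.
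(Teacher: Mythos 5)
Your proposal is correct and follows essentially the same route as the paper: apply Theorem~\ref{thm:heckefeq} to $\widehat T_n$, then kill the constant at $x=1$ using $\FR(x)+\FR(1/x)=0$ together with the symmetry of $\widehat T_n$ (your involution $(a,b,c,d)\mapsto(d,c,b,a)$ is exactly the paper's identity $(\smat 0110)\widehat{T}_n(\smat 0110)=\widehat{T}_n$). Your write-up just makes explicit the computation of $\gamma(x)/\gamma(\infty)$ and the pairing argument that the paper leaves implicit.
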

\begin{proof}
	By Theorem~\ref{thm:heckefeq} equation~\eqref{eq:heckefeqex} holds
	up to an additive constant. Setting $x=1$ and using the functional equation
	$\FR(x)+\FR(1/x)=0$ together with the fact that
	$(\smat 0110)\widehat{T}_n(\smat 0110)=\widehat{T}_n$ 
	shows that~\eqref{eq:heckefeqex} holds for $x=1$, and hence the constant is~$0$.
\end{proof}

The functional equations corresponding to the three $\widehat{T}_n$'s given above are
\begin{subequations}
	\be 
	\tag{\theequation a}
	\FR(2x)+\FR(\tfrac{x}{2})+\FR(\tfrac{x+1}{2})+\FR(\tfrac{2x}{x+1})-3\FR(x)
	\= L(\tfrac{x}{x+1})-L(\tfrac{1}{2}) +\tfrac{1}{2}\log(2)\log(x)\,, 
	\label{eq:heckefe2}
	\ee
	\be
	\tag{\theequation b}
	\label{eq:heckefe3}
		\begin{split}
		\FR(3x)+\FR(\tfrac{3x}{x+1})+\FR(\tfrac{3x}{2x+1})
		+\FR(\tfrac{x}{3})+\FR(\tfrac{x+1}{3})+\FR(\tfrac{x+2}{3})
		+\FR(\tfrac{2x+1}{x+2})-4\FR(x) \\
		\= L(\tfrac{2x+1}{2x+4})+L(\tfrac{x}{x+1})+L(\tfrac{2x}{2x+1})-
		L(1)-L(\tfrac{2}{3})+\log(3)\log(x)\,,
		\end{split}
	\ee
	\be
	\tag{\theequation c}
	\label{eq:heckefe4}
		\begin{split}
		&\textstyle\sum_{j=0}^{3}\big(\FR(\tfrac{x+j}{4})+\FR(\tfrac{4x}{jx+1})\big)
		+\FR(\tfrac{2x}{x+2})+\FR(\tfrac{2x+1}{2})
		+\FR(\tfrac{2x+2}{x+3})+\FR(\tfrac{3x+1}{2x+2})-6\FR(x)
		\\& \= \textstyle\sum_{j=1}^{3}L(\tfrac{jx}{jx+1})
		+L(\tfrac{x}{x+2})+L(\tfrac{x+1}{x+3})+L(\tfrac{3x+1}{3x+3})
		-4L(1)+L(\tfrac{2}{3})+3\log(2)\log(x)\,.
		\end{split}
	\ee
\end{subequations}
We will make use of the first and the third of these in Section~\ref{sec:quadratic} 
to obtain explicit evaluations of the function $J(x)$ for certain quadratic units~$x$.
	
We remark that there is no loss of generality in passing from the apparently more 
general functional equation~\eqref{eq:heckefeq} to the special 
case~\eqref{eq:heckefeqex}, because one can show that the functional equation coming 
from any other choice of $\widetilde{T}_n$ would be the same up to a linear 
combination of specializations 
of the functional equations~\eqref{eq:3term2}.

\section{Special values at positive rationals}
\label{sec:rational}
The special values of the Herglotz function at positive rationals turn out to always 
be expressible in terms of dilogarithms. Here it is more convenient to use the 
original function~$F(x)$ instead of~$\FR(x)$.
\begin{theorem}
	\label{thm:Fspecial}
	For any positive rational $x$ the value $F(x)-F(1)$ is a rational 
	linear combination of~$\Li_2$ with arguments belonging to the 
	cyclotomic field~$\QQ(e^{2\pi i x}, e^{2\pi i/x})$.
\end{theorem}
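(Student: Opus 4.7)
The plan is to exploit the integral representation~\eqref{eq:integral} together with the factorization of $1-z^m$ into cyclotomic factors. Substituting $y=z^q$ in the integral for $F(p/q)$ yields
\[F(p/q) \= \int_0^1 \Big(\frac{q}{1-z^q} \+ \frac{1}{\log z}\Big)\log(1-z^p)\,\frac{dz}{z},\]
and the partial-fraction expansion $\frac{q}{1-z^q} = \sum_{j=0}^{q-1}\frac{1}{1-\zeta_q^j z}$ (with $\zeta_q = e^{2\pi i/q}$) splits off the $j=0$ term, which together with $\frac{1}{\log z}$ is exactly the integrand for $F(p)$. This yields the identity
\[F(p/q) \= F(p) \+ \sum_{j=1}^{q-1}\int_0^1\frac{\log(1-z^p)}{z\,(1-\zeta_q^j z)}\,dz.\]

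To remove the unknown integer-argument value $F(p)$, I would apply the same identity with $q$ replaced by $p$; since $F(p/p)=F(1)$ this gives
\[F(p)\m F(1) \= -\sum_{j=1}^{p-1}\int_0^1\frac{\log(1-z^p)}{z\,(1-\zeta_p^j z)}\,dz,\]
so that $F(p/q)-F(1)$ becomes an explicit finite sum of integrals of the form $I(\alpha,\beta) := \int_0^1 \frac{\log(1-\alpha z)}{1-\beta z}\,dz$, obtained after the routine expansion $\log(1-z^p)=\sum_{k=0}^{p-1}\log(1-\zeta_p^k z)$ and the partial fraction $\frac{1}{z(1-\beta z)}=\frac{1}{z}+\frac{\beta}{1-\beta z}$; here $\alpha,\beta$ run over roots of unity in $\QQ(\zeta_p,\zeta_q)$.

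Each $I(\alpha,\beta)$ can be evaluated in closed form (for instance by the substitution $w=(1-\alpha z)/(1-\beta z)$) as a combination of $\Li_2$ at cyclotomic arguments such as $1-\beta/\alpha$ and $(1-\beta/\alpha)/(1-\beta)$, plus products of logarithms of cyclotomic units. The remaining log-log contributions are then to be converted into $\Li_2$-values via Euler's reflection $\Li_2(z)+\Li_2(1-z)=\zeta(2)-\log(z)\log(1-z)$ and Landen's identity $\Li_2(z)+\Li_2(z/(z-1))=-\tfrac12\log^2(1-z)$, while the a priori $\QQ(\zeta_{pq})$-valued coefficients collapse to $\QQ$ using the Galois invariance of $F(p/q)-F(1)$ together with the distribution relation $\sum_{j=0}^{n-1}\Li_2(\zeta_n^j z)=\tfrac{1}{n}\Li_2(z^n)$. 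The hardest step will be the bookkeeping: tracking all the log-log terms arising from the various $I(\alpha,\beta)$ and showing that they can be reassembled into $\Li_2$-values with rational coefficients, with no residual transcendental pieces left over.
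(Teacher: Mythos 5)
Your reduction of $F(p/q)-F(1)$ to a finite sum of integrals is essentially the paper's own first step: the paper writes
$F(P/Q)-F(1)=\int_0^1\bigl(\tfrac{Q}{1-t^Q}-\tfrac{P}{1-t^P}\bigr)\log(1-t^P)\,\tfrac{dt}{t}$
and applies the same distribution relations you use, arriving at sums of
$f(\alpha,\beta)=\int_0^1\tfrac{\log(1-\alpha t)}{t(1-\beta t)}\,dt$ over roots of unity $\alpha,\beta$ with integer coefficients. Up to that point your argument is sound and identical in substance.

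The gap is in the evaluation step. By splitting $\tfrac{1}{z(1-\beta z)}=\tfrac1z+\tfrac{\beta}{1-\beta z}$ you introduce root-of-unity coefficients $\beta$, and the resulting integrals $\int_0^1\tfrac{\log(1-\alpha z)}{1-\beta z}\,dz$ produce boundary terms such as $\tfrac{\log(1-\beta)}{\beta}\log(\alpha-\beta)$, i.e.\ products $\log(a)\log(b)$ where $a$ and $b$ are \emph{unrelated} cyclotomic numbers (e.g.\ $1-\beta$ and $1-\beta/\alpha$), weighted by non-rational constants. Euler's reflection and Landen's identity only dispose of the special shapes $\log(z)\log(1-z)$ and $\log^2(1-z)$; they do not reduce a general such product to dilogarithms, and there are also $2\pi i$/branch contributions to track since $\alpha,\beta$ lie on the unit circle. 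Showing that all of these residual pieces cancel in aggregate, and that the cyclotomic coefficients rationalize, is not bookkeeping --- it is the entire content of the theorem in your formulation, and you have only asserted it. The fix is not to split the integrand: the unsplit integral satisfies the exact identity
\begin{equation*}
f(\alpha,\beta)\=\Li_2\Bigl(\frac{\beta}{\beta-1}\Bigr)\m\Li_2\Bigl(\frac{\alpha-\beta}{1-\beta}\Bigr)\,,
\end{equation*}
with \emph{no} logarithmic correction terms and integer coefficients; it is verified in one line by differentiating in $\alpha$ (both sides have derivative $\tfrac{1}{\alpha-\beta}\log\tfrac{1-\alpha}{1-\beta}$) and checking that both sides vanish at $\alpha=0$. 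With this identity the theorem follows immediately, and all the difficulties your proposal defers simply never arise.
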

\begin{proof}
From the second integral representation in~\eqref{eq:integral} we get
	\[ F\Big(\frac{P}{Q}\Big)-F(1) 
	\= F\Big(\frac{P}{Q}\Big)-F\Big(\frac{P}{P}\Big)
	\=\int_{0}^{1}\Big(\frac{Q}{1-t^Q}-\frac{P}{1-t^P}\Big)
	\log(1-t^P)\,\frac{dt}{t}\,.  \]
Therefore, using the distribution relations
	\[\log(1-t^P) \= \sum_{\beta^{P}=1}\log(1-\alpha t)
	\qquad\mbox{ and }\qquad
	\frac{Q}{1-t^Q} \= 
	\sum_{\alpha^{Q}=1}\frac{1}{1-\beta t}\,,\]
we see that for all positive integers $P$, $Q$ one has
	\[ F\Big(\frac{P}{Q}\Big) - F(1) \= \sum_{\alpha^P=1}
    \Biggl(\sum_{\beta^Q=1,\,\beta\ne 1}\,{f(\alpha,\beta)} 
	\m\sum_{\beta^P=1,\,\beta\ne 1}\,{f(\alpha,\beta)}\Biggr)\,, \]
where
	\[f(\alpha,\beta)\=
	\int_{0}^{1}\frac{\log(1-\alpha t)}{t(1-\beta t)}\,dt\,.\]
The statement of the theorem follows using the identity
	\[f(\alpha,\beta) \= \Li_2\Big(\frac{\beta}{\beta-1}\Big) \m
	\Li_2\Big(\frac{\alpha-\beta}{1-\beta}\Big)\,,\]
which can be proved easily by differentiating both sides in~$\alpha$.
\end{proof}

We give three concrete examples of the theorem.

\noindent{\bf 1.} Setting $(P,Q)=(1,n)$ in the above proof and using 
$\Li_2(\tfrac{x}{x-1})+\Li_2(x)=-\tfrac{1}{2}\log^2(1-x)$ gives
	\be \label{eq:f1n}
	F(n)-F(1) \= -\frac{(n-1)(n-2)\pi^2}{24n}
	\+\frac{\log^2 n}{2} \+\frac{1}{2}\sum_{j=1}^{n-1}
	\log^2\Big(2\sin\frac{\pi j}{n}\Big)\,. \ee
Notice that this statement is stronger than Theorem~\ref{thm:Fspecial}, 
since we only get products of logarithms rather than dilogarithms. 
Conjecturally, the only positive rationals $x$ for which $F(x)-F(1)$ can reduce 
to products of logarithms are $x=n$ or $x=1/n$, since for other values of $x$ the 
corresponding formal combination of arguments of $\Li_2$ does not lie in the Bloch 
group of~$\ol\QQ$\,.
	
\noindent{\bf 2.} Combining~\eqref{eq:f1n} with the 3-term relation~\eqref{eq:3term} 
gives $F(\tfrac{n}{n+1})-\Li_2(\tfrac{n}{n+1})$ as a bilinear combination 
of logarithms of elements of $\QQ(\zeta_{n},\zeta_{n+1})$, 
where $\zeta_{n}=e^{2\pi i/n}$.

\noindent{\bf 3.} As a further example, which we will generalize in 
Section~\ref{sec:k2cocycle}, we have
	\be\label{eq:f25}
	F\Big(\frac{2}{5}\Big)-F(1) \= 
	\frac{1}{2}\Li_2\Big(\frac{4}{5}\Big)-\frac{\pi^2}{5}
    \+\log^2\Big(2\sin\frac{\pi}{5}\Big)\+\log^2\Big(2\sin\frac{2\pi}{5}\Big)
    \+\log(2)\log\Big(\frac{2}{5}\Big)\,.
	\ee

\smallskip
An argument similar to the one used in the proof of Theorem~\ref{thm:Fspecial} allows 
one to compute also the derivative of~$F$ at rational points.
\begin{proposition}
	For any coprime $p,q>0$ the difference $\tfrac{p}{q}F'(\tfrac{p}{q})-(1+\log(p))$
	is a linear combination of $\Li_2$ at $p$-th or $q$-th 
	roots of unity with coefficients in $\QQ(\zeta_p,\zeta_q)$.
\end{proposition}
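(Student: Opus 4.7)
The plan is to adapt the proof of Theorem~\ref{thm:Fspecial}: I would start from the integral representation~\eqref{eq:integral} of $F$, differentiate under the integral sign (which is legitimate because $\partial_x\log(1-y^x)=-y^x\log y/(1-y^x)$ stays bounded as $y\to 1$), and then substitute $y=t^q$ to obtain
\[\frac{p}{q}\,F'(p/q)\=-\int_0^1\Big(\frac{q}{1-t^q}+\frac{1}{\log t}\Big)\frac{p\,t^{p-1}\log t}{1-t^p}\,dt\,.\]

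Next I would expand the product of the two parenthesised factors using the distribution relations
\[\frac{q}{1-t^q}\=\sum_{\beta^q=1}\frac{1}{1-\beta t}\,,\qquad
  \frac{p\,t^{p-1}}{1-t^p}\=\sum_{\alpha^p=1}\frac{\alpha}{1-\alpha t}\,,\]
so that the integrand becomes, formally,
\[\sum_{\substack{\alpha^p=1\\ \beta^q=1}}\frac{\alpha\log t}{(1-\alpha t)(1-\beta t)}\+\frac{p\,t^{p-1}}{1-t^p}\,.\]
Coprimality $\gcd(p,q)=1$ forces $\alpha\neq\beta$ whenever $(\alpha,\beta)\neq(1,1)$, so each such summand of the double sum is $L^1$ on $[0,1]$; only the $(1,1)$ summand and the final term diverge logarithmically at $t=1$.

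For each convergent pair $(\alpha,\beta)\neq(1,1)$, the partial fraction
\[\frac{1}{(1-\alpha t)(1-\beta t)}\=\frac{1}{\alpha-\beta}\Big(\frac{\alpha}{1-\alpha t}\m\frac{\beta}{1-\beta t}\Big)\]
together with the elementary evaluation $\int_0^1\frac{\gamma\log t}{1-\gamma t}\,dt=-\Li_2(\gamma)$ (obtained by expanding $\frac{\gamma}{1-\gamma t}$ as a geometric series, valid for every $\gamma$ on the closed unit disk) reduces the integral to $\frac{\alpha}{\alpha-\beta}\bigl(\Li_2(\alpha)-\Li_2(\beta)\bigr)$. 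After incorporating the outer minus sign, summing over $(\alpha,\beta)\neq(1,1)$ produces exactly a $\QQ(\zeta_p,\zeta_q)$-linear combination of dilogarithms at $p$-th and $q$-th roots of unity.

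The hard part, and the only place where the constant $1+\log p$ enters, is the careful regularisation of the two divergent contributions. With a cutoff $t=1-\eps$, a single integration by parts ($u=\log t$, $dv=dt/(1-t)^2$) yields
\[-\int_0^{1-\eps}\frac{\log t}{(1-t)^2}\,dt\=1\m\log\eps\+O(\eps)\,,\]
while a direct antiderivative gives
\[-\int_0^{1-\eps}\frac{p\,t^{p-1}}{1-t^p}\,dt\=\log(1-(1-\eps)^p)\=\log p\+\log\eps\+O(\eps)\,.\]
The $\log\eps$ singularities cancel and the two constants add to $1+\log p$. Subtracting this from $\tfrac{p}{q}F'(p/q)$ leaves precisely the convergent sum of the previous paragraph, proving the proposition.
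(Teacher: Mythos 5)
Your argument is correct, and its engine is the same as the paper's: differentiate the integral representation \eqref{eq:integral}, expand via the distribution relations into a double sum over $p$-th and $q$-th roots of unity, and evaluate each off-diagonal term by partial fractions together with $\int_0^1\frac{\gamma\log t}{1-\gamma t}\,dt=-\Li_2(\gamma)$; this reproduces exactly the quantity $g(\alpha,\beta)=\alpha\,\frac{\Li_2(\alpha)-\Li_2(\beta)}{\alpha-\beta}$ appearing in the paper's proof, and your observation that $\gcd(p,q)=1$ forces $\alpha\ne\beta$ away from $(\alpha,\beta)=(1,1)$ is the right one. The two proofs part ways only in how the constant $1+\log p$ is extracted. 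The paper follows the template of Theorem~\ref{thm:Fspecial} verbatim: it considers $\tfrac pq F'(\tfrac pq)-F'(1)$, writing $F'(1)$ as $\tfrac pp F'(\tfrac pp)$, so that everything is a difference of two convergent double sums and no divergent integral ever appears; the $\log p$ then emerges from the diagonal terms of the second sum via $g(\alpha,\alpha)=\Li_1(\alpha)$ and $\sum_{\alpha^p=1,\,\alpha\ne1}\Li_1(\alpha)=-\log p$, and the $1$ from the separately computed value $F'(1)=1$. You instead keep a single double sum and regularize the two logarithmically divergent pieces (the $(1,1)$ term and the leftover $\tfrac{p\,t^{p-1}}{1-t^p}$ term) with a cutoff at $t=1-\eps$; your computation that the $\log\eps$ contributions cancel and leave $1+\log p$ checks out. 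Your route has the small advantage of not needing $F'(1)=1$ as an input; the paper's has the advantage of never manipulating divergent integrals. Either way the proposition follows.
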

\begin{proof}
Replacing the integral reprsentation~\eqref{eq:integral}  
by~\eqref{eq:integral_derivative} in the previous proof we obtain
	\[ \frac{P}{Q}F'\Big(\frac{P}{Q}\Big)-F'(1) \= \sum_{\alpha^P=1}
    \Biggl(\sum_{\beta^Q=1,\,\beta\ne 1}\,{g(\alpha,\beta)} 
	\m \sum_{\beta^P=1,\,\beta\ne 1}\,{g(\alpha,\beta)}\Biggr)\,, \]
where
	\[g(\alpha,\beta) \= -\int_0^1\frac{\alpha\log(y)}{(1-\alpha y)(1-\beta y)}\,dy
	\= \begin{cases}
		\ds \alpha\frac{\Li_{2}(\alpha)-\Li_{2}(\beta)}{\alpha-\beta}\,,
		\quad &\alpha\ne\beta\,,\\[5pt]
		\ds \Li_1(\alpha)\,,
	    \quad &\alpha=\beta\,.
	\end{cases}\]
The result then follows by noting that $F'(1)=1$, and that 
$\sum_{\alpha^P=1,\alpha\ne 1}\Li_1(\alpha)=-\log(P)$.
\end{proof}
Similarly, one can show that the value 
$(p/q)^kF^{(k)}(p/q)-F^{(k)}(1)+(-1)^k(k-1)!\log(p)$ 
is a combination of $\Li_m$, $m=2,\dots,k+1$ at $p$-th or $q$-th roots of unity with 
coefficients in $\QQ(\zeta_p,\zeta_q)$.

\section{Kronecker limit formula for real quadratic fields} \label{sec:klf}
The function $F$ appeared in~\cite{Z1} in a formulation of a so-called Kronecker 
limit formula for real quadratic fields. (The original Kronecker limit formula was 
the corresponding statement for imaginary quadratic fields.) This is a formula 
expressing the value
	\[\vr(\Bb) \= \lim_{s\to1}\bigg(D^{s/2}\zeta(\Bb,s)
    \m\frac{\log\eps}{s-1}\bigg)\,,\]
where $\Bb$ is an element of the narrow class group~\footnote{more precisely, the 
group of narrow ideal classes of invertible fractional $\Oo_D$-ideals, where two 
ideals belong to the same narrow class if their quotient is a principal ideal 
$\lambda O_D$ with $N(\lambda)>0$. For more details see~\cite{Z2} or~\cite{HK}}
of the quadratic order $\Oo_D = \ZZ + \ZZ\frac{D+\sqrt{D}}{2}$ of discriminant $D>0$, 
$\zeta(\Bb,s)$ is the corresponding partial zeta function (the sum of 
$N(\mathfrak{a})^{-s}$ over all invertible $\Oo_D$-ideals $\mathfrak{a}$ in the class 
$\Bb$), and $\eps=\eps_D$ is the smallest unit $>1$ in $\Oo_D$ of norm~$1$. The 
numbers~$\vr(\Bb)$ are of interest because the value at $s=1$ of the Dirichlet 
series $L_K(s,\chi) = \sum_{\mathfrak{a}} \chi(\mathfrak{a})/N(\mathfrak{a})^s$ 
equals $D^{-1/2}\sum_{\Bb}\chi(\Bb)\vr(\Bb)$ for any character $\chi$ on the 
narrow ideal class group. The formula proved in~\cite{Z1} is that
	\be\label{eq:kronlim} \vr(\Bb) \= \sum_{w\in{\Red}(\Bb)}P(w,w')\,,\ee
where the function $P(x,y)$ for $x>y>0$ is defined as
	\[P(x,y) \= F(x) -F(y) +\Li_2\Big(\frac yx\Big) -\frac{\pi^2}{6}+
	\log\frac xy\,\Big(\gamma -\frac12\log(x-y) +\frac14\log\frac xy\Big)\,,\]
and $\Red(\Bb)$ is the set of larger roots $w=\frac{-b+\sqrt{D}}{2a}$ of all 
reduced primitive quadratic forms $Q(X,Y)=aX^2+bXY+cY^2$ ($a,c>0,\ a+b+c<0$) of
discriminant $D$ which belong to the class $\Bb$. Recall that narrow ideal classes 
correspond to $\PSL_2(\ZZ)$-orbits on the set of integral quadratic forms: if 
$\mathfrak{b}=\ZZ w_1 + \ZZ w_2\in\Bb$ with $\frac{w_1w_2'-w_2w_1'}{\sqrt{D}}>0$, 
then the quadratic form $Q(X,Y)=\frac{N(Xw_1+Yw_2)}{N(\mathfrak{b})}$ is in the 
corresponding orbit. The set $\Red(\Bb)$ is finite and every element $w\in\Red(\Bb)$ 
satisfies $w>1>w'>0$. Let us also denote $l(\Bb) = \#\Red(\Bb)$.
	
The set $\Red(\Bb)$ can also be understood in terms of continued fractions. To any 
element~$w\in K$ with $w>w'$ (here we assume that $K$ is embedded into $\RR$) one can 
associate a continued fraction
	\[w \= b_1-\frac{1}{b_2-\ds\frac{1}{\ddots}}\,,\]
which by the standard theory of continued fractions is eventually periodic.
The sequence $\{b_j\}_{j\ge1}$ is periodic (without a pre-period) 
if and only if the number $w$ is \textit{reduced} (i.e., $w>1>w'>0$),
and then has period $l=l(\Bb)$. We call the $l$-tuple 
$\cycle{b_1,\dots,b_{l}}$ the cycle associated to $w$.
If we fix a narrow ideal class $\Bb$ and take 
any $\mathfrak{b}\in\Bb^{-1}$ such that 
$\mathfrak{b}=\ZZ+\ZZ w$ with $w$ reduced, then the
equivalence class of $\cycle{b_1,\dots,b_{l}}$ 
modulo cyclic permutations depends only on $\Bb$. The 
set $\Red(\Bb)$ is then simply $\{w_1,\dots,w_{l}\}$, where
	\[w_j \= b_j-\frac{1}{b_{j+1}-\ds\frac{1}{\ddots}}\,,\]
and both $b_j$ and $w_j$ depend only on $j\,(\text{mod}\ l)$. 
Thus we can restate~\eqref{eq:kronlim} as
	\be \label{eq:kronlim2} \vr(\Bb)\= \sum_{j\Mod{l}}P(w_j,w_j')\,. \ee
Using the fact that $\sum_{j\Mod{l}}\bigl(w_j+1/w_j\bigr)=\sum_{j\Mod{l}}\bigl(w_j'+1/w_{j}'\bigr)$ 
and $\prod_{j\Mod{l}}w_j=\eps$, we can  
rewrite~\eqref{eq:kronlim2} as
	\be\label{eq:kronlim3} \vr(\Bb) \= 2\gamma\log\eps 
    \m l(\Bb)\frac{\pi^2}{6} \+ \sum_{j\Mod{l}}\Pp(w_j,w_j')\,, \ee
where the function $\Pp(x,y)$, $x>y>0$ is now defined by a simpler formula
	\be \label{eq:Pdef} \Pp(x,y) \= \FR(x)\m\FR(y)\+L\Big(\frac{y}{x}\Big)\,. \ee

One can use the Kronecker limit formula~\eqref{eq:kronlim} to prove 
various properties of~$\vr(\Bb)$. 
For example, the functional equation
\eqref{eq:3term} was used in~\cite{Z1} to prove Meyer's formula
	\[\vr(\Bb) - \vr(\Bb^*) \= -\frac{\pi^2}{6}\big(l(\Bb)-l(\Bb^*)\big)\,,\]
where $\Bb^*=\Theta\Bb$, and $\Theta$ is the narrow ideal class of principal ideals 
of negative norm. The key observation used in that proof is equivalent to the identity
	\be \label{eq:p3term} \Pp(x,y) \= \widetilde\Pp(x-1,1-y)
	\m\widetilde\Pp\Big(1-\frac{1}{x},\frac{1}{y}-1\Big)\,,\qquad x>1>y>0\,, \ee
where $\widetilde\Pp(x,y)=\FR(x)-\FR(y)+L(-y/x)$,
that allows one to rewrite $\vr(\Bb)-2\gamma\log\eps+\frac{\pi^2}{6}l(\Bb)$ 
in terms of the continued fraction associated to the wide ideal class 
containing~$\Bb$. For details we refer to~\cite{Z1}, but we do state here 
a version of the Kronecker limit formula for wide ideal classes, since we 
will use it in Section~\ref{sec:quadratic} below. A number $x\in K$ 
is called \textit{reduced in the wide sense} if $x>1$, $0>x'>-1$, and to 
any such number one associates a cycle $\cyclesq{a_1,\dots,a_m}$, which is 
simply the periodic part of the regular continued fraction of $x$, i.e., 
$x=a_1+1/(a_2+1/(a_3+\dots))$ with $a_{i}=a_{i\,(\text{mod}\ m)}$ for $i\in\ZZ$.
We define $\Red^{W}(\Aa)$ to be the set of reduced numbers $x$
(in the wide sense) such that $\ZZ x+\ZZ$ is a fractional ideal in $\Aa$. 
Then $\Red^{W}(\Aa)=\{x_1,\dots,x_m\}$, where
	\[x_i \= a_i +\frac{1}{a_{i+1} +\ds\frac{1}{\ddots}}\,.\]
\begin{proposition}[\cite{Z1}*{Corollary~2, p.~179}]
	Let $\Aa$ be a wide ideal class in a real quadratic field $K$,
	$\eps_0>1$ the fundamental unit of~$K$, $x_1,\dots,x_m$ the elements of $K$ 
	satisfying $x_i>1$, $0>x_i'>-1$ and such that $\{1,x_i\}$ is a basis of some 
	ideal in $\Aa$, and $\cyclesq{a_1,\dots,a_m}$ the corresponding cycle of 
	integers. 
	Then
	\be \label{eq:kronlimwide}
	\vr(\Aa) \= 2\sum_{i=1}^{m}\widetilde{\Pp}(x_i,-x_i')
	\+4\gamma \log \eps_0\m\frac{\pi^2}{6}\sum_{i=1}^{m}(a_i-1)
	\ee
	with $\widetilde{P}$ as above.
\end{proposition}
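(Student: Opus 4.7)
My plan is to deduce~\eqref{eq:kronlimwide} from the narrow-class formula~\eqref{eq:kronlim3}, using the three-term identity~\eqref{eq:p3term} as the bridge between the regular and negative continued fractions. The first step is to decompose the wide class $\Aa$ into narrow classes: if $N(\eps_0)=-1$ then $\Aa$ is itself a single narrow class $\Bb$, and the smallest totally positive unit satisfies $\eps=\eps_0^2$; if $N(\eps_0)=+1$ then $\Aa=\Bb\sqcup\Theta\Bb$, where $\Theta$ is the class of principal ideals of negative norm, and $\eps=\eps_0$. Either way $\vr(\Aa)$ is the sum of the contributing narrow zeta values, and the term $2\gamma\log\eps$ from~\eqref{eq:kronlim3} (summed over the narrow classes making up $\Aa$) reassembles into $4\gamma\log\eps_0$.

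The second step is the continued-fraction translation. The elementary identity $a+1/y=(a+1)-1/(2-1/(2-\cdots-1/(1+1/y)))$ with $a-1$ twos, applied cyclically to the wide cycle $\cyclesq{a_1,\dots,a_m}$, produces the negative-continued-fraction expansion of a narrow-reduced number associated with $\Bb$. The resulting narrow cycle $\cycle{b_1,\dots,b_l}$ has length $l=\sum_{i}a_i$ and consists of entries $a_i+1$ interspersed with blocks of $a_{i+1}-1$ twos; at the distinguished positions one has $w_j=x_i$ and $w_j'=-x_i'$, while the intermediate $w_j$'s are Möbius iterates under $w\mapsto 2-1/w$.

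The heart of the argument is a telescoping computation. Within $\sum_j\Pp(w_j,w_j')$, the contribution from each block of consecutive $b_j=2$ entries can be rewritten via repeated application of~\eqref{eq:p3term}, after which the intermediate $\widetilde{\Pp}$ arguments cancel pairwise and leave a single $\widetilde{\Pp}(x_i,-x_i')$ term plus an integer constant contributing $-\tfrac{\pi^2}{6}(a_i-1)$. Summing these simplifications over all blocks and over the one or two narrow classes composing $\Aa$ converts~\eqref{eq:kronlim3} directly into~\eqref{eq:kronlimwide}; the prefactor $2$ in front of $\widetilde{\Pp}(x_i,-x_i')$ arises either from combining the two narrow classes when $N(\eps_0)=+1$, or from the double-covering of the wide cycle by the narrow cycle when $N(\eps_0)=-1$.

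The main obstacle lies in the bookkeeping during the telescoping step. Since the Rogers dilogarithm $L$ is only well-defined modulo $\tfrac{\pi^2}{2}\ZZ$, one has to track precisely which integer multiples of $\pi^2$ are produced by each application of~\eqref{eq:p3term}, so that the residual constants come out exactly to $-\tfrac{\pi^2}{6}(a_i-1)$ and not merely modulo this lattice. A secondary technicality is handling the two norm cases $N(\eps_0)=\pm 1$ in a unified way, so that the prefactor~$2$ and the fundamental unit~$\eps_0$ (rather than~$\eps$) appear in the correct places in the final formula regardless of the sign of $N(\eps_0)$.
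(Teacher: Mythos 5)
Your overall route is exactly the one the paper points to: the paper gives no proof of its own here, but cites \cite{Z1}*{Corollary~2} and states that the key ingredient is the identity~\eqref{eq:p3term} relating $\Pp$ to $\widetilde\Pp$, which is precisely your plan (split $\vr(\Aa)$ into one or two narrow-class terms according to $N(\eps_0)=\mp1$, apply~\eqref{eq:kronlim3}, and telescope along the passage from the negative to the regular continued fraction). Your handling of the residues and of the reassembly of $2\gamma\log\eps$ into $4\gamma\log\eps_0$ in both norm cases is correct.

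There is, however, a concrete error in the continued-fraction conversion step as you state it. The displayed identity $a+1/y=(a+1)-1/(2-\cdots-1/(1+1/y))$ already fails for $a=1$ (the left side is $\frac{y+1}{y}$, the right side $\frac{y+2}{y+1}$), and the resulting description of the narrow cycle (``entries $a_i+1$ interspersed with blocks of $a_{i+1}-1$ twos'') is wrong: for the wide cycle $\cyclesq{2,1,1}$ in $\QQ(\sqrt{10})$ it would give $\cycle{3,2,2,2}$, whose fixed point lies in $\QQ(\sqrt2)$, whereas the correct narrow cycle is $\cycle{4,3,2,3}$. The correct classical conversion treats the positions of the wide cycle asymmetrically: after doubling the period if $m$ is odd, the odd-position entries $a_i$ become entries $a_i+2$ of the negative continued fraction and the even-position entries $a_i$ become runs of $a_i-1$ twos. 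This still yields total length $\sum_i a_i$ over one wide period, so your count of $\Pp$-terms, the telescoping scheme via~\eqref{eq:p3term}, and the source of the prefactor $2$ survive unchanged; but as written the step would fail, and the subsequent block-by-block bookkeeping (including the exact constants $-\tfrac{\pi^2}{6}(a_i-1)$) must be carried out against the corrected cycle.
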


\section{Special values at quadratic units}
\label{sec:quadratic}
While there does not seem to be a general formula, akin to
that of Theorem~\ref{thm:Fspecial}, that would express in closed form the 
individual values $\FR(x)$ for $x$ in real quadratic fields, there are many 
rational linear combinations of these values that can be evaluated. 
One way to obtain such identities is to specialize functional equations 
satisfied by $\FR(x)$, for example the $3$-term relation~\eqref{eq:3term2}, or 
the more complicated functional equations from Section~\ref{sec:funceq}.
A completely different source of identities, as was explained 
in~\cite[\S9]{Z1}, stems from the fact that if $\chi$ is a genus character on 
the narrow class 
group of a real quadratic field of discriminant $D$, then the special value
$L_K(1,\chi) = D^{-1/2}\sum_{\Bb}\chi(\Bb)\vr(\Bb)$ is equal to a product
$L_{D_1}(1)L_{D_2}(1)$, where the splitting $D=D_1D_2$ corresponds to the
genus character $\chi$, and $L_D(s)$ is the Dirichlet series
	\[L_D(s)=\sum_{n\ge1}\Bigl(\frac{D}{n}\Bigr)n^{-s}\,.\]
Since $L_D(1)$ can be evaluated explicitly as 
	\[L_D(1) \= 
	\begin{cases}
		\frac{2h\log\eps}{\sqrt{|D|}}\,,\qquad \;\; D>0\,,\\ 
		\frac{2\pi h}{w\sqrt{|D|}}\,,\qquad\;\; D<0\,,
	\end{cases}
	\] 
and $\vr(\Bb)$ can be evaluated in terms of $\FR$ via the Kronecker limit 
formula~\eqref{eq:kronlim3}, this leads to nontrivial identities 
for~$\FR(x)$. We will discuss different realizations of this idea in this section, 
looking first at the case where each genus consists of only one narrow ideal class.

{\def\arraystretch{1.2}
\begin{table}
\begin{tabular}{ r l l }
	\toprule
	$n$ & $n^2-1$ & $4\,\bigl(\JR(n+\sqrt{n^2-1})-n\tfrac{\pi^2}{24}\bigr)$\\
	\midrule
	$2$ & $3$ &             $2S_{1,12}$\\
	$3$ & $2^3$ &           $6S_{1,8}$\\
	$4$ & $3\cdot 5$ &      $2S_{1,15}+4S_{5,12}$\\
	$5$ & $2^3\cdot 3$ &    $3S_{1,24}+2S_{8,12}$\\
	$6$ & $5\cdot 7$ &      $2S_{1,140}+4S_{5,28}$\\
	$7$ & $2^4\cdot 3$ &    $7S_{1,12}+2S_{8,24}$\\
	$8$ & $3^2\cdot 7$ &    $2S_{1,28}+4S_{12,21}$\\
	$9$ & $2^4\cdot 5$ &    $22S_{1,5}+4S_{8,40}$\\
	$11$ & $2^3\cdot 3\cdot 5$ &   $3S_{1,120}+2S_{8,60}+2S_{12,40}+2S_{5,24}$\\
	$12$ & $11\cdot 13$ &          $2S_{1,572}+4S_{13,44}$\\
	$13$ & $2^3\cdot 3\cdot 7$ &   $3S_{1,168}+2S_{8,21}+S_{12,56}+S_{24,28}$\\
	$14$ & $3\cdot 5\cdot 13$ &    $2S_{1,780}+4S_{5,156}+4S_{13,60}$\\
	$16$ & $3\cdot 5\cdot 17$ &    $2S_{1,1020}+4S_{12,85}+4S_{5,204}$\\
	$19$ & $2^3\cdot 3^2\cdot 5$ & $6S_{1,40}+12S_{5,8}+2S_{12,120}+2S_{24,60}$\\
	$21$ & $2^3\cdot 5\cdot 11$ &  $3S_{1,440}+2S_{8,220}+2S_{5,88}+2S_{40,44}$\\
	$22$ & $3\cdot 7\cdot 23$ &    $2S_{1,1932}+2S_{28,69}+2S_{21,92}$\\
	$23$ & $2^4\cdot 3\cdot 11$ &  $4S_{1,33}+2S_{8,264}+S_{12,44}+S_{24,88}$\\
	$27$ & $2^3\cdot 7\cdot 13$ &         $3S_{1,728}+2S_{8,364}+2S_{28,104}+2S_{13,56}$\\
	$29$ & $2^3\cdot 3\cdot 5\cdot 7$ &   $3S_{1,840}+S_{12,280}+2S_{5,168}+S_{24,140}+S_{28,120}+2S_{21,40}+S_{56,60}$\\
	$34$ & $3\cdot 5\cdot 7\cdot 11$ &    $2S_{1,4620}+4S_{5,924}+2S_{28,165}+2S_{60,77}+2S_{21,220}$\\
	$36$ & $5\cdot 7\cdot 37$ &           $2S_{1,5180}+4S_{5,1036}+4S_{37,140}$\\
	$41$ & $2^4\cdot 3\cdot 5\cdot 7$ &   $4S_{1,105}+2S_{8,840}+S_{12,140}+2S_{5,21}+S_{24,280}+S_{28,60}+2S_{40,168}+S_{56,120}$\\
	$43$ & $2^3\cdot 3\cdot 7\cdot 11$ &  $3S_{1,1848}+2S_{8,924}+S_{12,616}+S_{24,77}+S_{28,264}+S_{44,168}+S_{33,56}+S_{21,88}$\\
	$56$ & $3\cdot 5\cdot 11\cdot 19$ &   $2S_{1,12540}+4S_{12,1045}+4S_{5,2508}+2S_{44,285}+2S_{76,165}$\\
	$61$ & $2^3\cdot 3\cdot 5\cdot 31$ &  $3S_{1,3720}+S_{12,1240}+2S_{5,744}+S_{24,620}+2S_{40,93}+S_{60,1032}+S_{120,124}$\\
	$69$ & $2^3\cdot 5\cdot 7\cdot 17$ &  $3S_{1,4760}+2S_{8,2380}+2S_{5,952}+2S_{28,680}+2S_{40,476}+2S_{56,85}+2S_{136,140}$\\
	$77$ & $2^3\cdot 3\cdot 13\cdot 19$ & $3S_{1,5928}+2S_{8,741}+S_{12,1976}+S_{24,988}+2S_{13,456}+S_{76,312}+S_{152,156}$\\
	$83$ & $2^3\cdot 3\cdot 7\cdot 41$ &  $3S_{1,6888}+2S_{8,861}+3S_{12,2296}+S_{24,1148}+S_{28,984}+S_{56,492}+4S_{21,328}$\\
	$131$ & $2^3\cdot 3\cdot 5\cdot 11\cdot 13$ & 
	\begin{tabular}{@{}l@{}}
		$3S_{1,17160}+S_{12,5720}+2S_{5,3432}+S_{24,2860}+2S_{40,429}+S_{44,1560}+2S_{13,1320}$\\
		\quad\quad $+S_{60,1144}+S_{88,780}+2S_{104,165}+S_{120,572}+S_{156,440}+S_{220,312}$
	\end{tabular}\\
	\bottomrule
\end{tabular}
\caption{Values of $\JR(n+\sqrt{n^2-1})$}
\label{tab:jnminus}
\end{table}}

{\def\arraystretch{1.2}
\begin{table}[htbp]
\begin{tabular}{r l l}
	\toprule
	$n$ & $n^2+1$ &$2\,\bigl(\JR(n+\sqrt{n^2+1})-n\tfrac{\pi^2}{24}\bigr)$\\
	\midrule
	$1$ & $2$ &          $S_{1,8}$\\
	$2$ & $5$ &          $4S_{1,5}$\\
	$3$ & $2\cdot 5$ &   $S_{1,40}+2S_{5,8}$ \\
	$4$ & $17$ &         $2S_{1,17}$ \\
	$5$ & $2\cdot 13$ &  $S_{1,104}+2S_{8,13}$\\
	$7$ & $2\cdot 5^2$ & $3S_{1,8}+4S_{5,40}$\\
	$8$ & $5\cdot 13$ &  $2S_{1,65}+4S_{5,13}$\\
	$11$ & $2\cdot 61$ & $S_{1,488}+2S_{8,61}$\\
	$13$ & $2\cdot 5\cdot 17$ &  $ S_{1,680}+ 2S_{8,85} + 2S_{5,136}$\\
	$17$ & $2\cdot 5\cdot 29$ &  $ S_{1,1160}+ 2S_{5,232} + 2S_{29,40}$\\
	$19$ & $2\cdot 181$&         $ S_{1,1448} + 2S_{8,181}$\\
	$23$ & $2\cdot 5\cdot 53$ &  $ S_{1,2120} + 2S_{5,424} + 2S_{40,53}$\\
	$31$ & $2\cdot 13\cdot 37$ & $ S_{1,3848} + 2S_{13,296} + 2S_{37,104}$\\
	$37$ & $2\cdot 5\cdot 137$ & $ S_{1,5480} + 2S_{8,685} + 4S_{5,1096}$\\
	$47$ & $2\cdot 5\cdot 13\cdot 17$ & $ S_{1,8840} + 4S_{5,1768} + 2S_{40,221} + 2S_{13,680} + 2S_{85,104}$\\
	$73$ & $2\cdot 5\cdot 13\cdot 41$ & $ S_{1,21320} + 2S_{5,4264} + 2S_{40,533} + 2S_{13,1640} + 2S_{104,205}$\\
	\bottomrule
\end{tabular}
\caption{Values of $\JR(n+\sqrt{n^2+1})$}
\label{tab:jnplus}
\end{table}}

\subsection{Explicit formulas when there is one class per genus}
In Table~\ref{tab:jnminus} (resp. Table~\ref{tab:jnplus}) we collect all numbers $n$ 
less than $100\,000$ (and conjecturally all~$n$) such that each genus of binary 
quadratic forms of discriminant $4(n^2-1)$ (resp.~$4(n^2+1)$) contains exactly one 
narrow equivalence class. In each case we give an identity for 
$\JR(n+\sqrt{n^2\pm1})$, where
	\[\JR(x) \= 
	J(x)-\frac{\log^2(2)}{2}+\frac{\pi^2}{24}\Big(x-\frac{1}{x}\Big)\]
is related to $\FR(x)$ by
	\[\JR(x) = \FR(2x)-2\FR(x)+\FR(x/2)\,,\]
parallel to the relation~\eqref{eq:jfrel} between $J(x)$ and $F(x)$.
The numbers~$S_{p,q}$ are defined as
	\begin{align*}
		S_{p,q}   &\= \log(\eps_p)\log(\eps_q)\,,
	\end{align*}
where $\eps_d$ for $d$ a fundamental discriminant is the fundamental unit of 
$\QQ(\sqrt{d})$ if $d>1$, and $\eps_d$ is defined to be $2$ if $d=1$.

\begin{theorem} \label{thm:jgenus}
	The values of $\JR(n+ \sqrt{n^2\pm1})$ for the $45$ known one-class-per-genus 
    cases are as given in Table~\ref{tab:jnminus} and Table~\ref{tab:jnplus}. 
\end{theorem}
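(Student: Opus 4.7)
The plan is to combine the Kronecker limit formula~\eqref{eq:kronlim3} for the real quadratic order $\Oo_D$ of discriminant $D=4(n^2\pm1)$ in $K=\QQ(\sqrt{n^2\pm1})$ with the classical factorization $L_K(1,\chi)=L_{D_1}(1)\,L_{D_2}(1)$ satisfied by each genus character~$\chi$ (corresponding to a splitting $D_0=D_1D_2$ of the underlying fundamental discriminant into fundamental discriminants). For each tabulated $n$, the number $\eps=n+\sqrt{n^2\pm1}$ is a fundamental unit of~$\Oo_D$ whose associated narrow class~$\Bb_\eps$ (in the minus case) or whose square's narrow class (in the norm-$-1$ plus case) admits a minus-continued-fraction cycle of length~$1$, namely $\cycle{2n}$ or $\cycle{4n^2+2}$ respectively. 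Consequently, the sum $\sum_j\Pp(w_j,w_j')$ in~\eqref{eq:kronlim3} for this class collapses to a single summand involving only $\FR(\eps)$.

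To obtain the $\JR$-combination $\FR(2\eps)-2\FR(\eps)+\FR(\eps/2)$ rather than a bare $\FR(\eps)$, I would apply the Hecke-operator functional equation~\eqref{eq:heckefe2} at $x=\eps$. That identity expresses $\FR(2\eps)+\FR(\eps/2)-3\FR(\eps)$, which is $\JR(\eps)-\FR(\eps)$, in terms of $\FR(\tfrac{\eps+1}{2})$, $\FR(\tfrac{2\eps}{\eps+1})$ and explicit Rogers-dilogarithm terms. Since $\eps$ is a quadratic unit, the arguments $\tfrac{\eps+1}{2}$ and $\tfrac{2\eps}{\eps+1}$ are themselves reduced quadratic irrationals in~$K$ whose $\FR$-values are controlled by the Kronecker data of other narrow classes; put more conceptually, $\JR(\eps)$ represents the Kronecker contribution of the $T_2$-image of $\Bb_\eps$.

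The computation then becomes mechanical for each $n$: enumerate the narrow class group $\Cl^+(\Oo_D)$ via reduced binary quadratic forms, list the minus-CF cycles, and use~\eqref{eq:kronlim3} to write each $\vr(\Bb)$ as a sum of $\FR$-values. Evaluate the linear combinations $\sqrt{D}\,L_K(1,\chi)=\sum_\Bb\chi(\Bb)\,\vr(\Bb)$ using the closed forms $L_d(1)=2h_d\log\eps_d/\sqrt{d}$ for $d>0$ (and the imaginary analogue for $d<0$), obtaining rational multiples of $\log\eps_{D_1}\log\eps_{D_2}=S_{D_1,D_2}$. The one-class-per-genus hypothesis makes the character matrix $[\chi(\Bb)]_{\Bb,\chi}$ invertible over~$\QQ$, so one can solve for $\vr(\Bb_\eps)$ explicitly as a rational linear combination of $S_{p,q}$; extracting $\JR(\eps)$ via the reductions of the previous paragraph then produces the entries of Tables~\ref{tab:jnminus} and~\ref{tab:jnplus}.

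The principal obstacle is the bookkeeping of Rogers-dilogarithm remainders: each application of~\eqref{eq:heckefe2}, as well as each $L(y/x)$ arising from~\eqref{eq:Pdef} in the Kronecker formula, contributes a dilogarithm at an algebraic argument, and these must collectively cancel modulo~$\tfrac{\pi^2}{2}\ZZ$ by repeated use of the five-term relation~\eqref{eq:5term1}. Verifying these cancellations explicitly for all $45$ tabulated discriminants is the delicate case-by-case part of the argument. A secondary technical point is that $D=4(n^2\pm1)$ need not be a fundamental discriminant, so one must track the passage between the order $\Oo_D$ and the maximal order of~$K$ carefully in order to identify the correct fundamental discriminants $D_1,D_2$ appearing in each row of the tables.
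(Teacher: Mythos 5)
Your overall strategy --- reduce $\JR(\eps)$ to $\FR$-values at reduced quadratic irrationals via the Hecke functional equation \eqref{eq:heckefe2}, package these into Kronecker limits $\vr(\Bb)$ via \eqref{eq:kronlim3}, and evaluate using genus characters and the factorization $L_K(1,\chi)=L_{D_1}(1)L_{D_2}(1)$ --- is the same as the paper's. But there are genuine gaps in the execution. The most serious is your claim that the one-class-per-genus hypothesis makes the character matrix invertible ``so one can solve for $\vr(\Bb_\eps)$ explicitly as a rational linear combination of $S_{p,q}$.'' This fails for the trivial character: $\sum_\Bb\vr(\Bb)$ is essentially the constant term of $D^{s/2}\zeta_{\Oo_D}(s)$ at $s=1$, which contains $\gamma\log\eps_D$ and logarithms of the discriminant and is emphatically not of the form $S_{p,q}$. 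Hence no individual $\vr(\Bb)$ reduces to products of logarithms; only combinations $\sum_\Bb c_\Bb\vr(\Bb)$ orthogonal to the trivial character do. The paper therefore first proves (Theorem~\ref{thm:jasrho}) the exact identities $2\JR(u)=\vr(\Bb_1)-\vr(\Bb_2)+\log(2)\log(u)$ for $n$ even and $4\JR(u)=\vr(\Bb_1)+\vr(\Bb_2)-\vr(\Bb_3)-\vr(\Bb_4)+3\log(2)\log(u)$ for $n$ odd, $n\ne7$, in which the coefficients sum to zero; and when the classes live in two different orders (discriminants $D$ and $4D$) one must still check that the two zeta constant terms cancel, a point the paper flags as a necessary, if tedious, computation. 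Your proposal has no substitute for this step.

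Two further problems. For odd $n$ the relevant classes have cycles $\cycle{\frac{n+3}{2},2,2,2}$ and $\cycle{\frac{n+1}{2},4}$, and the single equation \eqref{eq:heckefe2} cannot produce them: the paper also needs the $T_4$-equation \eqref{eq:heckefe4}. And your treatment of the plus case via $v^2=(2n^2+1)+\sqrt{(2n^2+1)^2-1}$ with cycle $\cycle{4n^2+2}$ places you in the order of discriminant $16n^2(n^2+1)$, which is far from one-class-per-genus and whose genus characters do not yield the units $\eps_p,\eps_q$ appearing in Table~\ref{tab:jnplus}; the paper instead stays with the order $\ZZ[\sqrt{n^2+1}]$ and uses the wide-class Kronecker limit formula \eqref{eq:kronlimwide} with regular continued-fraction cycles $\cyclesq{2n}$ and $\cyclesq{n-1,1,1}$. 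A smaller point: the dilogarithm bookkeeping is not a case-by-case verification over all $45$ discriminants and should not be done merely modulo $\pi^2/2$ (the rational multiple of $\zeta(2)$ in each entry must come out exactly); the paper handles it once and for all by proving the identities relating $\JR(u)$ to the $\Pp(w_j,w_j')$ as exact identities valid for every real $u>1$, e.g.\ using $2L(\tfrac{u}{u+1})+2L(\tfrac1u)-L(\tfrac1{u^2})=2L(1)$, so that the only case-by-case work is the genus-theoretic evaluation.
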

We observe that, with the exception of $n=7$, 9, 23, and~$41$, the cases listed 
in Table~\ref{tab:jnminus} also follow from Theorem~6 (for $n$ even) or from 
Theorems~8 and~9 (for~$n$ odd) of~\cite{MW}. The identities listed in 
Table~\ref{tab:jnplus} appear to be new.

As already indicated, we will deduce Theorem~\ref{thm:jgenus} from the 
following general expression for $\JR(n+\sqrt{n^2\pm1})$ as a combination 
of Kronecker limits $\vr(\Bb)$.
\begin{theorem} \label{thm:jasrho}
    For all $n\ge1$ the value $\JR(n+\sqrt{n^2\pm1})$
    is a rational linear combination of $\zeta(2)$, $\log(2)\log(n+\sqrt{n^2\pm1})$,
    and the values~$\vr(\Bb)$ for at most four narrow classes~$\Bb$ of quadratic 
    forms of discriminant $4^{a}(n^2\pm1)$, with $a=0, \pm1$.
\end{theorem}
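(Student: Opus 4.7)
The approach is to express $\JR(x)$ in terms of Kronecker limit values via the formulas of Section~\ref{sec:klf}. Starting from $\JR(x)=\FR(2x)-2\FR(x)+\FR(x/2)$ and using the inversion identity $\FR(y)+\FR(1/y)=0$ from~\eqref{eq:3term2}, one can combine the three terms into differences of the form $\FR(w)-\FR(w')$ which, after adding the appropriate $L$-correction, become $\Pp(w,w')$-values (or $\widetilde\Pp$-values in the wide-reduced case) of the kind appearing in~\eqref{eq:kronlim3} and~\eqref{eq:kronlimwide}. These $\Pp$-values are then identified with the Kronecker limits $\vr(\Bb)$ of suitable narrow classes in orders of discriminant $4^a(n^2\pm1)$, $a\in\{0,\pm1\}$, of the field $K=\QQ(\sqrt{n^2\pm1})$.

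For $x=n+\sqrt{n^2-1}$ (where $xx'=1$), the identities $\FR(x/2)=-\FR(2x')$ and $\FR(x)=-\FR(x')$ yield the clean reduction
\[\JR(x) \= \Pp(2x,2x') \m \Pp(x,x').\]
A direct negative-continued-fraction computation shows that $x$ is the unique narrowly reduced element in the principal class of discriminant $4(n^2-1)$ (cycle $\cycle{2n}$, length $1$), while $2x$ is narrowly reduced in the principal class of discriminant $16(n^2-1)$, whose cycle $\cycle{4n,n}$ has length $2$ (with the two reduced elements $2x$ and $x/2$ producing identical $\Pp$-values). Applying~\eqref{eq:kronlim3} in each class and using $\eps^{(16(n^2-1))}=\eps^2$ to cancel the $\gamma\log\eps$ contributions, one obtains
\[\JR(n+\sqrt{n^2-1}) \= \tfrac12\vr(\Bb_0^{(16(n^2-1))}) \m \vr(\Bb_0^{(4(n^2-1))}),\]
so in this case just two narrow classes suffice, with vanishing $\zeta(2)$ and $\log(2)\log(x)$ coefficients.

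The case $x=n+\sqrt{n^2+1}$ (where $xx'=-1$) is more delicate because $x$ is widely but not narrowly reduced. Here one applies the wide Kronecker limit formula~\eqref{eq:kronlimwide} to $x$ (wide cycle $\cyclesq{2n}$ of length $1$) together with analogous formulas for $2x$ and $x/2$ in the relevant orders; the class structure depends on the parity of $n$ and on the arithmetic of $n^2+1$, but in each case at most four narrow classes are needed. The main obstacle is the bookkeeping in this case: one must verify that the $\gamma\log\eps_0$ corrections from the several Kronecker limit formulas cancel (using the multiplicative relations among the fundamental units of the nested orders) and that the surviving $L$-values at quadratic arguments such as $\pm1/x^2$ simplify, via Abel's five-term relation~\eqref{eq:5term1} and $L(1/2)=\pi^2/12$, to a rational linear combination of $\zeta(2)$ and $\log(2)\log(n+\sqrt{n^2+1})$.
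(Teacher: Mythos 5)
Your reduction for the minus case is correct and is genuinely different from the paper's. From $\JR(u)=\FR(2u)-2\FR(u)+\FR(u/2)$ and the inversion identity alone you get $\JR(u)=\Pp(2u,2u')-\Pp(u,u')$, and the cycles $\cycle{2n}$ and $\cycle{4n,n}$ (with $\Pp(u/2,u'/2)=\Pp(2u,2u')$ and cycle product $u^2$, so the $\gamma\log\eps$ terms cancel) do give $\JR(u)=\tfrac12\vr(\Bb^{(16(n^2-1))})-\vr(\Bb^{(4(n^2-1))})$. This avoids the Hecke functional equations entirely, which the paper needs because it insists on classes such as $\cycle{n+1,2}$, $\cycle{\frac{n+1}{2},4}$ living in discriminants $4^a(n^2-1)$ with $a\le1$ and must then verify the resulting $\Pp$-identities via \eqref{eq:heckefe2} and \eqref{eq:heckefe4}. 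The price you pay is that the form $X^2-4nXY+4Y^2$ attached to $2u$ is primitive of discriminant $16(n^2-1)=4^2(n^2-1)$, which lies outside the range $a\in\{0,\pm1\}$ asserted in the theorem; this is not merely cosmetic, since the one-class-per-genus hypothesis in the application (Theorem~\ref{thm:jgenus}) must be imposed on whichever discriminants actually occur.

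The genuine gap is in the plus case. The identity $\JR(v)=\widetilde\Pp(2v,-2v')-\widetilde\Pp(v,-v')$ and the wide cycles $\cyclesq{2n}$, $\cyclesq{4n,n}$ are fine, but \eqref{eq:kronlimwide} is stated for wide ideal classes of the field $K$, i.e.\ of the maximal order, whereas the class of $2v$ lives in the order $\ZZ[2\sqrt{n^2+1}]$ of discriminant $16(n^2+1)$. That order contains no unit of norm $-1$ (only $v^2$, not $v$ or $v^3$, lies in it), so its wide and narrow class groups do not coincide, and converting $\vr$ of a wide class into the $\vr(\Bb)$ of narrow classes demanded by the theorem is precisely the nontrivial step (the content of \eqref{eq:p3term}) that you defer to ``bookkeeping''; likewise the cancellation of the $\gamma\log\eps_0$ contributions between the two nested orders is asserted, not proved. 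You also predict ``surviving $L$-values at quadratic arguments'' needing the five-term relation, whereas in your own reduction the $L$-terms cancel identically --- a sign the plus-case computation was not actually carried out. The paper avoids all of this by taking both wide classes, $\cyclesq{2n}$ and $\cyclesq{n-1,1,1}$, inside the single order of discriminant $4(n^2+1)$, where $v$ itself is a norm~$-1$ unit so wide equals narrow, and proving the resulting $\widetilde\Pp$-identity for all real $v>1$ from \eqref{eq:heckefe2} and \eqref{eq:3term2}; the cases $n=1,2$ still need separate treatment, which you do not mention.
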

\begin{proof}
We first consider the case of $\JR(u)$ for $u=n+\sqrt{n^2-1}$. Define~$\Bb_1$ 
and~$\Bb_2$ to be the narrow classes with cycles $\cycle{2n}$ and $\cycle{n+1,2}$, 
respectively, and if~$n$ is odd  also define~$\Bb_3$ and~$\Bb_4$ to be the narrow 
classes corresponding to the cycles $\cycle{\frac{n+3}{2},2,2,2}$ 
and $\cycle{\frac{n+1}{2},4}$ respectively. Then we claim that 
	\begin{align}
	\label{eq:ju2}
	2\JR(u) &\= \vr(\Bb_1)-\vr(\Bb_2)+\log(2)\log(u)\,, &&n\text{ even}\,,\\ 
	\label{eq:ju4}
	4\JR(u) &\= \vr(\Bb_1)+\vr(\Bb_2)-\vr(\Bb_3)-\vr(\Bb_4)
	+3\log(2)\log(u)\,,&&n\text{ odd},\, n\ne 7\,.
	\end{align}
We have
	\[\mathrm{Red}(\Bb_1) \= \{u\}\,,
	\quad
	\mathrm{Red}(\Bb_2) \= \Big\{\frac{u+1}{2},\,\frac{2u}{u+1}\Big\}\,,\]
and for $n$ odd also
	\[\mathrm{Red}(\Bb_3) \= 
	\Big\{\frac{4u}{3u+1},\frac{3u+1}{2u+2},\frac{2u+2}{u+3},\frac{u+3}{4}\Big\}
	\,,\qquad \mathrm{Red}(\Bb_4) \= \Big\{\frac{4u}{u+1},\frac{u+1}{4}\Big\}\,.\]
Since $u'=1/u$, we can use the Kronecker limit formula~\eqref{eq:kronlim3} 
to rewrite equation~\eqref{eq:ju2} as
	\[2\JR(u) \= 
	\Pp\bigl(u,\tfrac{1}{u}\bigr)-\Pp\bigl(\tfrac{u+1}{2},\tfrac{u+1}{2u}\bigr)
	-\Pp\bigl(\tfrac{2u}{u+1},\tfrac{2}{u+1}\bigr)+\zeta(2)+\log(2)\log(u)\,,\]
where we recall that $\Pp(x,y)=\FR(x)-\FR(y)+L(y/x)$. This identity, in fact, holds 
for all real $u>1$, as can be derived easily using the functional 
equation~\eqref{eq:heckefe2} and the following simple relation for 
the Rogers dilogarithm
	\[2L\bigl(\tfrac{u}{u+1}\bigr)\+2L\bigl(\tfrac{1}{u}\bigr)
    \m L\bigl(\tfrac{1}{u^2}\bigr) \= 2L(1)\,.\]
Similarly,~\eqref{eq:ju4} is equivalent to
\begin{align*}
	4\JR(u) \= &\Pp\bigl(u,\tfrac{1}{u}\bigr)
	\+2\Pp\bigl(\tfrac{u+1}{2},\tfrac{u+1}{2u}\bigr)
	\m2\Pp\bigl(\tfrac{u+1}{4},\tfrac{u+1}{4u}\bigr)\\
	&\m2\Pp\bigl(\tfrac{4u}{3u+1},\tfrac{4}{u+3}\bigr)
	\m2\Pp\bigl(\tfrac{3u+1}{2u+2},\tfrac{u+3}{2u+2}\bigr)
	\+3\zeta(2)\+3\log(2)\log(u)\,.
\end{align*}
Again, this identity holds for all $u>1$, and can be derived using the 
functional equations~\eqref{eq:heckefe2} and~\eqref{eq:heckefe4} for $\FR(x)$.
Finally, for $n=7$ we have
	\[4\JR(7+\sqrt{48}) \= 
	\vr(\Bb_1)-\vr(\Bb_2')+2\zeta(2)+\tfrac{7}{2}\log(2)\log(7+\sqrt{48})\,,\]
where $\Bb_2'$ corresponds to the cycle $\cycle{6,2,2}$.
This can again be derived using~\eqref{eq:heckefe2} and~\eqref{eq:heckefe4}.

The case of $\JR(v)$ for $v=n+\sqrt{n^2+1}$ is similar. 
Here we define $\Aa_1$ and~$\Aa_2$ to be the wide ideal classes with
cycles $\cyclesq{2n}$ and $\cyclesq{n-1,1,1}$ respectively. We claim 
that 
	\bes
	4\JR(v) \= \vr(\Aa_1)-\vr(\Aa_2)+2\log(2)\log(v)+n\zeta(2)\,,
	\qquad\qquad n>2\,.
	\ees
Once again, using
	\[\Red_w(\Aa_1) \,=\, \{v\}\,,
	\qquad
	\Red_w(\Aa_2) \,=\, 
	\big\{\tfrac{2v}{v+1},\,\tfrac{v+1}{v-1},\,\tfrac{v-1}{2}\big\}\,,\]
and the Kronecker limit formula~\eqref{eq:kronlimwide}
one can rewrite the identity as
	\[2\JR(v) \= \widetilde\Pp\bigl(v,\tfrac{1}{v}\bigr)
	-\widetilde\Pp\bigl(\tfrac{2v}{v+1},\tfrac{2}{v-1}\bigr)
	-\widetilde\Pp\bigl(\tfrac{v+1}{v-1},\tfrac{v-1}{v+1}\bigr)
	-\widetilde\Pp\bigl(\tfrac{v-1}{2},\tfrac{v+1}{2v}\bigr)-\frac{1}{2}\zeta(2)
	+\log(2)\log(v)\,,\]
(with $\widetilde \Pp$ as in~\eqref{eq:p3term}) which follows easily 
from~\eqref{eq:heckefe2} and the $3$-term relation~\eqref{eq:3term2}. 
Finally, the cases $n=1,2$ do not fit the above scheme, but can be derived directly 
using functional equations that $\JR(n+\sqrt{n^2+1})$ is a linear combination 
of $\zeta(2)$ and $\log(2)\log(n+\sqrt{n^2+1})$. 
For instance, the case $n=1$ follows directly by substituting $x=1+\sqrt{2}$ 
into~\eqref{eq:heckefe2} and applying the $3$-term relation~\eqref{eq:3term2} 
with $x=\sqrt{2}$. The derivation for $n=2$ is slightly more complicated, but 
again only involves the functional equations~\eqref{eq:3term2} 
and~\eqref{eq:heckefe2}.
\end{proof}

\begin{proof}[Proof of Theorem~\ref{thm:jgenus}]
	Since in all the cases listed in the tables there is one class per genus
	of quadratic forms, we can rewrite the corresponding linear combination
	of $\vr(\Bb_i)$ as a rational linear combination 
	of $D^{1/2}L_{\mathcal{O}_D}(1,\chi)$, where $\chi$ runs over genus 
	characters of the corresponding class groups.
	In each case the identity then follows from the factorization
	of $L_{\mathcal{O}}(s,\chi)$ into a product of two Dirichlet 
	$L$-functions (for general discriminants, see, for example,~\cite{KM}).
	
	There is a small subtlety in some cases when we need to consider a 
    combination $\vr(\Bb)-\vr(\Bb')$ with~$\Bb$ of discriminant~$D$ and~$\Bb'$ 
    of discriminant~$4D$ and a priori the resulting expression for 
	$\JR(n+\sqrt{n^2\pm1})$ may involve constant terms of $\zeta_{\Oo_D}(s)$ and 
	$\zeta_{\Oo_{4D}}(s)$ at $s=1$. 
	However, a simple but somewhat tedious calculation using the explicit 
	expression for $\zeta_{\mathcal{O}}(s)$ (see, say~\cite{K}) and the 
	relation between the class groups of discriminants~$D$ and~$4D$ 
	(see,~e.g.,~\cite[Cor.~5.9.9]{HK}) shows that 
	in all these cases the nontrivial contributions coming
    from $\zeta_{\mathcal{O}}(s)$ cancel out.
\end{proof}

Note that to say that~$\eps$ is a number of the form $n+\sqrt{n^2\pm1}$ is equivalent 
to saying that $\eps>1$ is a quadratic unit with even trace (or a unit in an order of 
even discriminant). As far as we can ascertain, there are no similar formulas
for~$\JR(\eps)$ when~$\eps$ is a unit of odd trace.

\subsection{General case}
The proof of Theorem~\ref{thm:jgenus} shows that $\JR(n+\sqrt{n^2\pm1})$ 
can always be expressed as an algebraic linear combination of $\zeta(2)$, 
$\log(2)\log(n+\sqrt{n^2\pm1})$, and terms of 
the form $D^{1/2}L_{\Oo_D}(1,\chi)$, where $D=4^{a}(n^2\pm1)$, 
$a\in\{0,\pm1\}$, and $\chi$ is a narrow class group character.

Recall that the Stark conjecture predicts that the special value at $s=1$ 
of the Artin \hbox{$L$-function} of a Galois representation $\rho$ for a Galois 
extension $E/F$ is a simple multiple (a power of $\pi$ times an algebraic 
number) of the so-called Stark regulator (the determinant of a certain matrix 
of logarithms of units) in such a way that the
factorization $\zeta_E(s)=\prod_{\rho}L_{E/F}(s,\rho)^{\dim(\rho)}$ 
of the Dedekind zeta function of~$E$ matches the factorization 
of the regulator of~$E$ obtained by decomposing the group of units of~$E$ 
(after extending scalars to $\ol\QQ$) into irreducible 
${\rm Gal}(E/F)$-representations. 
For the general formulation of the Stark conjecture, see~\cite[p.~25--28]{T}.

In the case of an abelian extension $H/K$ (in our cases, a ring class field) 
of a real quadratic field~$K$, all irreducible representations of 
${\rm Gal}(H/K)$ are one-dimensional, all irreducible representations
of ${\rm Gal}(H/\QQ)$ are one- or two-dimensional, and for any character $\chi$ 
of ${\rm Gal}(H/K)$ we have $L_{H/K}(s,\chi)=L_{H/\QQ}(s,\rho)$, where~$\rho$
is the two-dimensional representation of ${\rm Gal}(H/\QQ)$ induced 
from~$\chi$. The Stark regulator for $L_{H/\QQ}(1,\rho)$ is then a $2\times 2$,
$1\times 1$, or $0\times 0$ determinant of logarithms of units in $H$,
depending on whether $\mathrm{tr}(\rho(\s))$ is $2$, $0$, or $-2$, where 
$\s$ denotes the complex conjugation.

If $H$ is the ring class field of a real quadratic field $K$ corresponding to an 
order $\Oo_D\subset K$ and~$\chi$ a character on the corresponding narrow ideal 
class group, then $L_K(s,\chi)$ is the $L$-function of the corresponding character 
on ${\rm Gal}(H/K)$, and since $H$ is totally real, the Stark conjecture predicts 
that $L_K(1,\chi)$ should be an algebraic multiple of a $2\times 2$ determinant 
of units in~$H$ (compare with~\cite[p.~61]{S}). We therefore obtain the following result.

\begin{proposition}
Let $\eps>0$ be a quadratic unit with even trace. 
Assume the abelian Stark conjecture for the 
real quadratic field $\QQ(\eps)$. Then $\JR(\eps)$ is a rational linear 
combination of $\zeta(2)$, $\log(2)\log(\eps)$, and of $2\times 2$ determinants 
of algebraic units in the narrow ring class field of the quadratic order $\ZZ[2\eps]$.
\end{proposition}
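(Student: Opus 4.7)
The plan is to combine Theorem~\ref{thm:jasrho} with the abelian Stark conjecture via Fourier inversion on narrow ring class groups. Since $\eps>0$ is a quadratic unit with even trace $2n$ and norm $\pm 1$, it has the form $\eps=n+\sqrt{n^2\mp 1}$, so Theorem~\ref{thm:jasrho} expresses $\JR(\eps)$ as a rational linear combination of $\zeta(2)$, $\log(2)\log(\eps)$, and at most four values $\vr(\Bb_i)$, where each $\Bb_i$ is a narrow ideal class of an order $\Oo_{D_i}\subset K\coloneqq\QQ(\eps)$ of discriminant $D_i=4^{a_i}(n^2\mp 1)$, $a_i\in\{-1,0,1\}$.

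For each such $\Bb$, writing $\Cl^+(\Oo_{D})$ for the narrow class group, Fourier inversion gives
\[\vr(\Bb)\=\frac{1}{|\Cl^+(\Oo_{D})|}\sum_{\chi}\overline{\chi(\Bb)}\,D^{1/2}L_K(1,\chi)\,,\]
the sum running over characters $\chi$ of $\Cl^+(\Oo_{D})$. As recalled in the text preceding the proposition, each $L_K(s,\chi)$ coincides with the Artin $L$-function $L_{H_D/\QQ}(s,\rho_\chi)$ of the two-dimensional induced representation $\rho_\chi=\mathrm{Ind}_K^{\QQ}\chi$, where $H_D$ denotes the narrow ring class field of $\Oo_D$. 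Since $H_D$ is totally real, complex conjugation $\s$ acts trivially on it, so $\mathrm{tr}(\rho_\chi(\s))=2$, and the abelian Stark conjecture asserts that $L_K(1,\chi)$ equals an element of $\QQ(\chi)$ times a $2\times 2$ Stark regulator, namely a determinant of logarithms of units in $H_D$.

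Finally, one groups characters into Galois orbits over $\QQ$. Because $\vr(\Bb)$ is real while the pairs $(L_K(1,\chi),\,R_{\rho_\chi})$ transform $\mathrm{Gal}(\QQ(\chi)/\QQ)$-equivariantly, the individual $\QQ(\chi)$-coefficients in the inversion formula collapse into $\QQ$-coefficients upon summation over each orbit, so each $\vr(\Bb)$ becomes a rational linear combination of $2\times 2$ determinants of logarithms of units in $H_D$. The three orders $\Oo_{D_i}$ appearing, of discriminants $4^{a_i}(n^2\mp 1)$ with $a_i\le 1$, all contain the order $\ZZ[2\eps]$ (of discriminant $16(n^2\mp 1)=4^2(n^2\mp 1)$), so each $H_{D_i}$ is contained in the narrow ring class field $H$ of $\ZZ[2\eps]$. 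Substituting back into the expression from Theorem~\ref{thm:jasrho} then yields $\JR(\eps)$ as a rational combination of $\zeta(2)$, $\log(2)\log(\eps)$, and $2\times 2$ determinants of logarithms of units in $H$, which is the claim. The main subtlety beyond simply quoting Stark's conjecture is this descent from $\QQ(\chi)$ down to $\QQ$, which is routine but must be applied separately to each of the (at most three) discriminants that arise.
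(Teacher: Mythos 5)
Your overall route is the paper's: Theorem~\ref{thm:jasrho}, Fourier inversion on the narrow class group, the Stark conjecture for the induced two-dimensional representations (with $\mathrm{tr}\,\rho_\chi(\s)=2$ because the ring class field is totally real), Galois descent from $\QQ(\chi)$ to $\QQ$, and the inclusion of the relevant ring class fields in the narrow ring class field of $\ZZ[2\eps]$. All of that matches the intended argument.

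There is, however, one genuine gap: your Fourier inversion formula
\[\vr(\Bb)\=\frac{1}{|\Cl^+(\Oo_{D})|}\sum_{\chi}\overline{\chi(\Bb)}\,D^{1/2}L_K(1,\chi)\]
is divergent as written, because the term with $\chi=\chi_0$ is $D^{1/2}\zeta_{\Oo_D}(1)$ and $\zeta_{\Oo_D}(s)$ has a simple pole at $s=1$. Since $\vr(\Bb)$ is defined by subtracting the polar part of a single partial zeta function, the correct statement replaces the $\chi_0$-term by the constant term of $D^{s/2}\zeta_{\Oo_D}(s)-h^+\log\eps/(s-1)$ at $s=1$, i.e.\ by a generalized Euler--Kronecker constant of the order $\Oo_D$. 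This quantity is \emph{not} a priori a rational combination of $\zeta(2)$, $\log(2)\log(\eps)$, and Stark regulators, so you cannot simply apply Stark's conjecture ``to each $L_K(1,\chi)$'' and be done. What saves the argument is that in the specific combinations of $\vr(\Bb_i)$ produced by Theorem~\ref{thm:jasrho} these $\chi_0$-contributions cancel: when the classes involved share a discriminant this is immediate because the coefficients of the $\vr(\Bb_i)$ of that discriminant sum to zero, but when classes of discriminant $D$ and $4D$ both occur one needs an explicit comparison of the constant terms of $\zeta_{\Oo_D}(s)$ and $\zeta_{\Oo_{4D}}(s)$ at $s=1$ (using the relation between the two class groups) to see that the leftover pieces cancel. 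This is exactly the ``small subtlety'' flagged in the proof of Theorem~\ref{thm:jgenus}, and your write-up needs to address it before the descent step; the rest of the proposal (Galois equivariance of the Stark regulators giving rational coefficients, and $\Oo_{16(n^2\mp1)}\subseteq\Oo_{4^a(n^2\mp1)}$ for $a\le1$ giving the containment of ring class fields) is correct.
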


Here is an explicit numerical example. 
The field $K=\QQ(\sqrt{257})$ has class number~$3$ and its 
Hilbert class field $H$ is $K(\alpha)$, where $\alpha$ satisfies 
$\alpha^3-2\alpha^2-3\alpha+1=0$. If we let $\alpha_1<\alpha_2<\alpha_3$
be the roots of $x^3-2x^2-3x+1$, then to very high precision we find
	\[\JR(16+\sqrt{257}) \stackrel{?}{\=} 4\zeta(2)
	+\log(2)\log(16+\sqrt{257})
	+ 2\begin{vmatrix}
		\log(-\alpha_1) & \log(3-\alpha_1) \\
		\log(\alpha_3) & \log(3-\alpha_3) \\
	   \end{vmatrix}
	\,.\]
As another example, this time with a non-fundamental discriminant, we find
	\[2\,\JR(10+3\sqrt{11}) \stackrel{?}{\=} 5\zeta(2)
	+ \log(2)\log(10+3\sqrt{11})
	+ \begin{vmatrix}
		\log(\beta_1) & \log(\gamma_1) \\
		\log(\beta_2) & \log(\gamma_2) \\
	\end{vmatrix}
	\,,\]
where $\beta_1>\beta_2$ are the two largest real roots of $x^4-11x^3+24x^2-11x+1$ 
and $\gamma_j=\frac{\beta_j^2-9\beta_j+4}{1-\beta_j}$.

\section{Cohomological aspects} \label{sec:cohomology}
Note that the $3$-term relation
	\be \label{eq:f3term1}
	\FR(x)\m\FR(x+1)\m\FR\Big(\frac{x}{x+1}\Big) \=
	L(1/2)-L\Big(\frac{x}{x+1}\Big)\,, 
	\ee
when viewed modulo the more elementary right-hand side is exactly
the period relation satisfied by the component $\phi_S$ 
of a 1-cocycle $\{\phi_{\gamma}\}_{\gamma\in\PSL_2(\ZZ)}$ such that $\phi_T=0$. 
(Cf.~\cite{Z3} and also~\cite{LZ} for more discussion of this relationship.)
In this section we will discuss various ways---all of them still somewhat 
provisional---in which one can relate~$\FR$ to 1-cocycles for $\PSL_2(\ZZ)$.

\subsection{An interplay between the 5-term relation and the 3-term relation} \label{sec:5term}
If we extend $\FR$ to an even function on $\RR\sm\{0\}$, then the 
weaker form of relation~\eqref{eq:f3term1}
	\be \label{eq:f3term}
	\FR(x)-\FR(x-1)+\FR\Big(\frac{x-1}{x}\Big) \= L(x)-L(1/2)\Mod{\zeta(2)}
	\ee
now holds for all $x\in\RR\sm\{0,1\}$. 
The function $\Pp(x,y)$ defined by~\eqref{eq:Pdef}
then becomes an even piecewise continuous function on all of $\RR^2$ and
satisfies the functional equation
	\be \label{eq:p3termB}
	\Pp(x,y)-\Pp(x-1,y-1)
	+\Pp\Big(\frac{x-1}{x},\frac{y-1}{y}\Big) \= 0 \Mod{\zeta(2)}
	\ee
for all $x,y\in\RR\sm\{0,1\}$.
Conversely, if $\FR$ is any function and we define $L$ and $\Pp$ (modulo constants) 
by equations~\eqref{eq:f3term} and~\eqref{eq:Pdef}, respectively, then the left hand 
side of~\eqref{eq:p3termB} becomes a sum of $15$~$\FR$'s that can be grouped into $5$ 
$L$'s and then becomes the famous 5-term relation  (equivalent to~\eqref{eq:5term1}) 
for~$L$:
	\be \label{eq:5term2}
	L\Big(\frac{y}{x}\Big)
	-L\Big(\frac{y-1}{x-1}\Big)
	+L\Big(\frac{1-1/y}{1-1/x}\Big)
	-L(y)+L(x) \= 0 \Mod{\zeta(2)}\,.
	\ee
	
The anti-invariance of~$\FR$ and $L$ under inversion implies that~$\Pp$ is also anti-invariant 
and this together with~\eqref{eq:p3termB} implies that~$\Pp$ defines a cocycle for the 
group $\PSL_2(\ZZ)$ with values in the space of functions from $\RR^2$ to $\RR/\zeta(2)\ZZ$, 
by mapping~$T$ to~$0$ and~$S$ to~$\Pp$.

The above discussion thus gives an interesting connection, which we have not yet 
understood completely, between the 5-term relation and 1-cocycles for the group $\PSL_2(\ZZ)$.

\subsection{A cocycle with values in $\Lambda^2(\QQ^{\mathrm{ab}\, \times})$} \label{sec:k2cocycle}
A different construction of a 1-cocycle can be obtained from the evaluation of 
$F$ at positive rationals given by Theorem~\ref{thm:Fspecial}.
The formula that we established in the proof has the form
	\[F(\alpha)-F(1) \= \Li_2(\xi_{\alpha})\,,\] 
for a certain $\xi_{a}\in\ZZ[\QQ(e^{2\pi i \alpha}, e^{2\pi i/\alpha})]$.
It is therefore interesting to know when a linear combination of $\xi_{\alpha}$'s
lands in the Bloch group of~$\ol\QQ$ (see~\cite{Z4}).
For this we have to calculate $\delta(\xi_{\alpha})$, where 
$\delta\colon\ZZ[\ol\QQ] \to \Lambda^2(\ol\QQ^{\times})\otimes_{\ZZ}\QQ$ is the usual 
differential given 
by $\delta([x])=x \wedge (1-x)$. We find that if $\alpha=p/q$, $(p,q)=1$, then 
	\[\delta(\xi_{\alpha}) \= \beta(\alpha)-\beta(1/\alpha)-p\wedge q\,,\] 
where $\beta\colon\QQ\to\Lambda^2(\QQ^{\text{ab}\, \times})\otimes_{\ZZ}\QQ$ is 
defined by 
	\be \label{eq:beta} \beta(p/q) \= 
    \sum_{\lambda^q=1}(1-\lambda^p)\wedge (1-\lambda) \qquad ((p,q)=1)\,.\ee
	
We further note that the element $\xi_{\alpha}$ is Galois invariant, 
so whenever $\delta(\xi_{\alpha})$ vanishes, by Galois descent we get an element 
in the Bloch group of $\QQ$, which is a torsion group. Since $K_2(\QQ)$ is torsion 
(see Theorem~11.6 in~\cite{Mi}) the group $\Lambda^2(\QQ^{\times})\otimes_{\ZZ}\QQ$ 
is equal to $\delta(\QQ[\QQ])$. Therefore, whenever a linear 
combination $\sum_ic_i(\beta(\alpha_i)-\beta(1/\alpha_i))$ vanishes, 
there are rational numbers~$x_j$ and~$d_j$ such 
that $\sum_i c_i \xi_{\alpha_i} + \sum d_j [x_j]$ 
is in $\ker(\delta)$, and then the sum $\sum_ic_iF(\alpha_i)+\sum_jd_j\Li_2(x_j)$ 
vanishes modulo products of logarithms of algebraic numbers. A rather simple example 
of this was given in~\eqref{eq:f1n}. As a slightly less trivial example, one can 
show that $\beta(\frac{n}{n^2+1}) = \beta(\frac{n^2+1}{n})$, which implies that 
$F(\frac{n}{n^2+1})-F(1)-\frac{1}{2}\Li_{2}(\frac{n^2}{n^2+1})$
is a bilinear combination of logarithms of algebraic numbers, 
generalizing~\eqref{eq:f25}.

Two remarks about the function $\beta$ appearing above are that it takes 
values in $\Lambda^2$ of the group of cyclotomic units and that it satisfies the two 
identities $\beta(-x)=\beta(x)$ and $\beta(x+1)=\beta(x)$. Because of this, sending 
$T\mapsto 0$, $S\mapsto \phi$, where $\phi(p/q)=\delta(\xi_{p/q})+p\wedge q$ defines 
a $\PSL_2(\ZZ)$-cocycle. This cocycle can also be lifted to a cocycle given by 
$T\mapsto 0$, $S\mapsto \xi$ with values in functions from $\PP^1(\QQ)$ 
to $\QQ[\QQ^{\text{ab}}]/(\QQ[\QQ]+\mathcal{C}(\QQ^{\text{ab}}))$, where 
$\mathcal{C}(F)$ denotes the subspace of $\QQ[F]$ spanned by all specializations of 
the 5-term relation.

A further remark is that~$\beta$ itself can be used to construct 
infinitely many 1-cocycles, now with values in 
$\Lambda^2(\QQ(\zeta_p)^{\times})^{\PP^1(\mathbb{F}_p)}$ 
for any prime $p$: the mapping
$\beta_p\colon\ZZ/p\ZZ\to\Lambda^2(\QQ(\zeta_p)^\times)\otimes_{\ZZ}\QQ$ 
given by $\beta_p(n)=\beta(n/p)$ satisfies the functional 
equations~$\beta_p(n)=\beta_p(-n)=-\beta_p(1/n)=\beta_p(n+1)+\beta_p(n/(n+1))$, 
and thus sending $T\mapsto 0$, $S\mapsto \beta_p$ defines 
a $\PSL(2,\mathbb{F}_p)$-cocycle.

A final remark is that there is a formal similarity between the 
formula~\eqref{eq:beta} and the classical Dedekind sums arising in 
the modular transformation behavior of the Dedekind eta function.
This similarity can be made precise using the notion of generalized Dedekind symbols
due to Fukuhara~\cite{F}: the mapping $(p,q)\mapsto \beta(p/q)$ is an even 
generalized Dedekind symbol with values in $\Lambda^2(\QQ^{\mathrm{ab}\, \times})$, 
and the mapping $(p,q)\mapsto \delta(\xi_{p/q})+p\wedge q$ is its reciprocity 
function. Note that from this point of view the functional equations discussed in 
Section~\ref{sec:funceq} (or, more precisely, the corresponding functional equations 
for~$\beta$) are analogous to the functional equations discovered by Knopp~\cite{Kn} 
for the classical Dedekind sums.

\subsection{The Herglotz function and the weight 2 Eisenstein series} \label{sec:period}
In this final subsection, we will give an explanation of the cocycle nature of~$F$ by 
writing it as an Eichler-type integral. We slightly modify the definition of $F$ to
    \[ F^\star(z) \= \sum_{n\ge1}\frac{\psi(nz)-\log(nz)+(2nz)^{-1}}n \= F(z) 
    \+\frac{\pi^2}{12\,z}\,.\]
Using Binet's integral formula~\cite[p.~250]{WW} for the digamma function,
    \[\psi(x) \= \log x \,-\, \frac{1}{2x} \,-\, \int_{0}^{\infty}\frac{1}{e^{2\pi 
    t}-1}\,\frac{2t\,dt}{t^2+x^2}\,,\]
we obtain the following integral representation, valid for $\mathrm{Re}(z)>0$:
	\[ F^{\star}(z) \= -\int_{0}^{\infty} 
	\Biggl(\,\sum_{n=1}^{\infty}\frac1{n(e^{2\pi nt}-1)}\Biggr)\frac{2t\,dt}{t^2+z^2}
	\= -\int_{0}^{i\infty}H(\t) \Bigl(\frac{1}{\t+z}+\frac{1}{\t-z}\Bigr)\,d\t\,, \]
where $H(\t)=\sum_{n=1}^{\infty}\s_{-1}(n)\,q^n$. (Here, as usual, we use $q$ to 
denote $e^{2\pi i \t}$ and $\s_\nu(n)$ for the sum of the $\nu$-th powers of the 
positive divisors of~$n$.) Note that $H(\t) = \log(q^{1/24}/\eta(\t))$,
where~$\eta$ is the Dedekind eta function, and that its derivative is $2\pi i\, 
G_2^0(\t)$, where $G_2^0(\t)$ is the weight~2 Eisenstein series 
$G_2(\t)=-\frac1{24}+\sum_{n=1}^{\infty}\s_1(n)\,q^n$ 
without its constant term. Integrating by parts, we therefore get
    \[ F^\star(z) \= 
    2\pi i \int_{0}^{i\infty}G_{2}^{0}(\t)\log\Big(1-\frac{\t^2}{z^2}\Big)\,d\t\,. \]
This formula is valid as it stands for $\re(z)>0$ and gives yet another proof
of the analytic continuation of~$F$ to~$\CC'$ by deforming the path of integration to 
remain to the left of~$z$ if~$z$ is in the second quadrant and to the right of~$-z$ 
if~$z$ is in the third quadrant.

The above integral for $F^\star$ can be rewritten as
$F^\star(z)=F^{+}(z)+F^{-}(z)$, where
    \[ F^{\pm}(z) \= 2\pi i \int_{0}^{i\infty}G_{2}^{0}(\t)
    \,\Bigl(\log\Bigl(1\mp\frac{\t}{z}\Bigr)\pm\frac{\tau}{z}\Bigr)\,d\t\,. \]
Note that both $F^{+}$ and $F^{-}$ continue analytically to $\CC'$. If we denote 
by $\Hf^{+}$ and $\Hf^{-}$ the upper and lower half-planes respectively, then by 
splitting the integral from $0$ to $i\infty$ at $z$ or $-z$ we get
    \[F^{\pm}(z) \;\equiv\; H^{\pm}(z)\m H^{\pm}(-1/z) \+ \log(z)H(\pm z) \m 
    U(z^{\pm1}) \qquad \bigl(z\in\Hf^{\pm}\bigr)\,,\]
where ``$\equiv$'' means ``modulo elementary functions'', 
$U(z)=2\pi i \int_{z}^{\infty}G_2^{0}(\t)\log(\t)d\t$, and
    \[H^{\pm}(z) \= \pm 2\pi i \int_{z}^{\pm i\infty}G_{2}^{0}(\pm\t)\log(\t-z)\,d\t  
    \qquad (z\in\Hf^{\pm})\,.\]
Since $H^{\pm}(z)$ is obviously a 1-periodic function in $\Hf^{\pm}$ and $U(z)$ is 
essentially the primitive of $G_2(z)\log(z)$, this computation implies that 
$F^{\pm}(z)-F^{\pm}(z+1)-F^{\pm}(z/(z+1))=0$ modulo elementary functions, and that
$T\mapsto 0$, $S\mapsto F^{\pm}$ defines a 1-cocycle with values in the space of 
suitably nice analytic functions modulo elementary functions.

The above calculations go through much the same way for the higher Herglotz functions
	\[\mathscr{F}_k(z) \= \sum_{n\ge1}\frac{\psi(nz)}{n^{k-1}}\qquad (k>2)\,,\]
with~$H$ replaced by $\sum_{n\ge1}\s_{1-k}(n)q^n$, which for even $k$ is essentially 
the Eichler integral of the Eisenstein series of weight~$k$ on the full modular 
group. The functions $\mathscr{F}_k$ were defined in~\cite{VZ} in connection with a 
higher Kronecker ``limit'' formula for $\zeta_{K}(\Bb,s)$ at $s=k/2$ (for~$k$ even) 
rather than the limiting value at~$s=1$. It is likely that most of the properties we 
have given have analogues for higher Herglotz functions. This could be a topic for 
future research.

\end{document}